\newcommand{\I}{\mathcal I}
\newcommand{\ind}{\{1,\dots,m\}}
\newcommand{\F}{\mathcal F}
\newcommand{\FF}{\mathbb F}
\newcommand{\R}{\mathbb R}
\newcommand{\N}{\mathbb N}
\newcommand{\T}{\mathbb T}
\newcommand{\sol}{\mathfrak{S}}
\newcommand{\eps}{\varepsilon}
\newcommand{\ucv}{\rightrightarrows}
\newcommand{\ff}{\mathbf f}
\newcommand{\gb}{\mathbf g}
\newcommand{\pphi}{\bm\phi}
\newcommand{\uu}{\mathbf u}
\newcommand{\vv}{\mathbf v} 
\newcommand{\ww}{\mathbf w}
\newcommand{\aaa}{\mathbf a}
\newcommand{\EE}{\mathbb E}
\newcommand{\PP}{\mathbb{P}}
\newcommand{\HH}{\mathbb{H}}
\newcommand{\1}{\mathbbm1}
\newcommand{\Bor}{\mathscr{B}}
\newcommand{\e}{\textrm{\rm e}}
\newcommand{\Id}{\D{Id}}
\newcommand{\Mis}{\mathfrak{M}}
\newcommand{\tagliato}{$\kern-5 mm -$}
\newcommand{\tagliat}{$\kern-4 mm -$}
\newcommand{\D}[1]{\mbox{\rm #1}}
\newcommand{\dd}{\D{d}}
\newcommand{\om}{\omega}
\newcommand{\cyl}{(0,+\infty)\times M}
\newcommand{\vLip}{\big(\D{Lip}(M)\big)^m}
\newcommand{\curves}{\D{C}\big(\R_{+};M\big)}
\newcommand{\vC}{\big(\D{C}( M )\big)^m}
\newcommand{\vCuno}{\big(\D{C}^1( M )\big)^m}
\newcommand{\vCc}{\big(\D{C}_c( TM )\big)^m}
\newtheorem{teorema}{Theorem}
\newtheorem*{teorema*}{Theorem}
\newtheorem{prop}[teorema]{Proposition}
\newtheorem{lemma}[teorema]{Lemma}
\newtheorem{definition}[teorema]{Definition}
\newtheorem{guess}[teorema]{Remark}
\newtheorem{example}[teorema]{Example}
\newenvironment{oss}{\begin{guess} \begin{rm}}{\end{rm} \end{guess}}
\begin{document}

\title[Convergence of the solutions of discounted HJ systems]{Convergence of the solutions\\ of discounted Hamilton--Jacobi systems}

\thanks{\rm Work supported by ANR-07-BLAN-0361-02 KAM faible \&
ANR-12-BS01-0020 WKBHJ}
\author{Andrea Davini \and Maxime Zavidovique}
\address{Dip. di Matematica, {Sapienza} Universit\`a di Roma,
P.le Aldo Moro 2, 00185 Roma, Italy}
\email{davini@mat.uniroma1.it}
\address{
IMJ-PRG (projet Analyse Alg\' ebrique), UPMC,  
4, place Jussieu, Case 247, 75252 Paris Cedex 5, France}
\email{zavidovique@math.jussieu.fr} \keywords{asymptotic behavior of solutions, Mather measures, weak KAM Theory, viscosity solutions, optimal control}
\subjclass[2010]{35B40, 37J50, 49L25.}

\date{February 24, 2017}
\begin{abstract} We consider a weakly coupled system of discounted Hamilton--Jacobi equations set on a closed Riemannian manifold.  We prove that the corresponding solutions  converge to a specific solution of the limit system as the discount factor goes to $0$. 
The analysis is based on a generalization of the theory 
 of Mather minimizing measures for Hamilton--Jacobi systems and on suitable random representation  formulae for the discounted solutions.
\end{abstract}
%
%
\maketitle

\section*{Introduction}

In this paper, we are interested in the asymptotic behavior, as $\lambda\to 0^+$, of the solutions of the following system of weakly coupled Hamilton--Jacobi equations
\begin{equation*}
\sum_{j=1}^m b_{ij} u_j+\lambda u_i+H_i(x,D  u_i)=c\qquad\hbox{in $M$}
\end{equation*}
for $i\in\ind$, where $M$ is a compact, connected Riemannian manifold without boundary,  $c$ is a real number,   
$H_1,\dots,H_m$ are continuous function on $T^*M$, convex and coercive in the gradient variable, and $B= (b_{ij})$ is an $m\times m$ irreducible and weakly diagonally dominant matrix, see Section \ref{sys} for the precise assumptions.  The solution $\uu = (u_1,\cdots , u_m)^T: M\to \R^m$ is assumed to be continuous and to solve the above system in the viscosity sense. The sign and degeneracy condition assumed on the coefficients of $B$
amounts to requiring that $-B$ is the generator of a semigroup of stochastic matrices.

It is convenient to restate the system in the following vectorial form
\begin{equation}\label{intro discounted system}
(B+\lambda \Id)\uu +\HH(x, D  \uu) = c\1\qquad\hbox{in $M$},
\end{equation}
where we have used the notations   $\HH(x, D  \uu) = \big(H_1(x, D  u_1), \cdots ,H_m(x, D  u_m) \big) ^T$ and $\1=(1\cdots,1)^T\in\R^m$. The conditions assumed on $B$ imply, in particular, that $B\1=0$. 

When $\lambda=0$, there is a unique value $c$ for which \eqref{intro discounted system} admits solutions, hereafter denoted by $c(\HH)$ and termed {\em critical}. Furthermore, the solutions of the {\em critical system} 
\begin{equation}\label{intro critical system}
B\uu +\HH(x, D  \uu) = c(\HH)\1\qquad\hbox{in $M$}
\end{equation}
are not unique, not even up to addition of vectors of the form $a\1$, in general. 

When $\lambda>0$, on the other hand, the system \eqref{intro discounted system} satisfies a comparison principle, yielding the existence of a unique continuous solution $\uu^{\lambda,c}:M\to\R^m$ for every fixed $c\in \R$. Moreover, the solutions $\{\uu^{\lambda,c}\mid\lambda>0\}$ are equi--Lipschitz.
The peculiarity of the discounted system \eqref{intro discounted system} when   $c:=c(\HH)$ relies on the fact that the corresponding solutions $\uu^\lambda:=\uu^{\lambda,c(\HH)}$ are also equi--bounded. 
By Ascoli--Arzel\`a Theorem and by the stability of the notion of viscosity solution, we infer that  they uniformly converge, {\em along subsequences} as $\lambda$ goes to $0$, to viscosity solutions of the critical system \eqref{intro critical system}. Since the solutions of the critical system are not unique, it is not clear at this level that the limits of the $\uu^\lambda$ along different subsequences yield the same critical solution.\smallskip

In this paper, we address this question.  The main theorem we will establish is the following:

\begin{teorema}\label{intro main teo}
Let $\uu^\lambda$ be the solution of system \eqref{intro discounted system} with $c:=c(\HH)$ and $\lambda>0$. The functions $\uu^\lambda$ uniformly converge as $\lambda\to 0^+$ to a single solution $\uu^0$ of the critical system \eqref{intro critical system}.
\end{teorema}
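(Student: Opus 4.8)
The plan is to adapt the variational strategy developed for scalar discounted Hamilton--Jacobi equations to the present weakly coupled setting, exploiting the probabilistic interpretation afforded by the assumption that $-B$ generates a semigroup of stochastic matrices. First I would record the two facts already available: the solutions $\uu^\lambda$ are equi--Lipschitz and equi--bounded, so by Ascoli--Arzel\`a every sequence $\lambda_n\to 0^+$ has a subsequence along which $\uu^{\lambda_n}$ converges uniformly to some solution $\vv$ of the critical system \eqref{intro critical system}. The whole point is thus to show that every such accumulation point coincides with a single function $\uu^0$, which I would define intrinsically, independently of any subsequence.

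To produce $\uu^0$, I would build a generalization of Mather's minimizing measures to the system. Associated with $-B$ there is a continuous--time Markov chain $(Y_t)$ on the index set $\ind$, and combining it with controlled trajectories on $M$ yields a random representation formula for the discounted solutions, of the form
\begin{equation*}
u_i^\lambda(x) = \inf_{\xi}\ \mathbb{E}_{i}\!\left[\int_0^{+\infty} e^{-\lambda t}\big(L_{Y_t}(\xi_t,\dot\xi_t)+c(\HH)\big)\,dt\right],
\end{equation*}
where $L_k$ is the Lagrangian dual to $H_k$, the expectation is taken over the chain started at $i$, and $\xi$ ranges over admissible paths issuing from $x$. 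The corresponding Mather measures are probability measures $\mu$ on $TM\times\ind$ that are \emph{closed} with respect to the coupled dynamics, i.e. satisfy
\begin{equation*}
\int_{TM\times\ind}\Big(v\cdot D_x\phi_k(x)+\sum_{j=1}^m b_{kj}\,\phi_j(x)\Big)\,d\mu(x,v,k)=0\qquad\text{for all }\pphi\in\vCuno,
\end{equation*}
and that minimize $\int L\,d\mu$; their minimal action equals $-c(\HH)$. I would then define $\uu^0$ as the maximal critical subsolution $\ww$ satisfying the family of constraints $\int \ww\,d\mu\le 0$ for every such Mather measure $\mu$, and check (as in the scalar case) that $\uu^0$ is itself a solution of \eqref{intro critical system}.

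With $\uu^0$ in hand, the convergence is proved by establishing both inequalities $\limsup_{\lambda\to 0^+}\uu^\lambda\le \uu^0$ and $\liminf_{\lambda\to 0^+}\uu^\lambda\ge \uu^0$ uniformly. For the lower bound, given a Mather--constrained subsolution $\ww$, I would feed it into the discounted representation formula and use the defining closedness identity of the Mather measures, together with the subsolution inequality in Lagrangian form, to show $\ww\le \liminf \uu^\lambda$; taking the supremum over such $\ww$ gives $\uu^0\le \liminf \uu^\lambda$. For the upper bound, I would pass to a uniformly convergent subsequence $\uu^{\lambda_n}\to\vv$, attach to each $\lambda$ a \emph{discounted} Mather measure $\mu_\lambda$ realizing the infimum in the representation formula, and show that its weak--$\ast$ limit points are genuine Mather measures. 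Testing the discounted equation against $\mu_\lambda$ and passing to the limit would yield $\int \vv\,d\mu\le 0$ for every limit measure $\mu$, so that $\vv$ is a competitor in the definition of $\uu^0$ and hence $\vv\le\uu^0$.

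The main obstacle I anticipate is the construction and limiting analysis of the discounted Mather measures in the vectorial setting: one must set up the right state space $TM\times\ind$ carrying both the controlled flow and the $B$--driven jumps, prove tightness and the closedness of weak--$\ast$ limits (so that the coupling $B$ survives the limit), and verify that the representation formula is exact, i.e. that minimizers exist and the Markovian dynamics is correctly encoded. Controlling the interplay between the deterministic transport term $v\cdot D_x\phi_k$ and the jump term $\sum_{j}b_{kj}\phi_j$ as $\lambda\to 0^+$, and ensuring that the irreducibility of $B$ forces the constraints to select one and the same $\uu^0$, is where the genuinely new difficulties relative to the scalar case lie.
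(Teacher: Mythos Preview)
Your proposal is essentially the paper's own strategy: the random representation formula for $\uu^\lambda$, closed probability measures on $TM\times\ind$ with the coupled closedness condition, the Mather set $\Mis_0$, the family $\F$ of constrained critical subsolutions, the candidate $\uu^0=\sup\F$, and the two--inequality argument are exactly the ingredients the paper assembles. The ``main obstacle'' you single out---constructing discounted occupation measures on $TM\times\ind$, proving tightness, and showing their weak limits are closed and minimizing---is precisely what the paper carries out (its Proposition~\ref{prop discounted measures}).

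One point deserves correction: you have the roles of the two inequalities reversed, and this creates a small gap. In the paper, the inequality $\uu\leqslant\uu^0$ for an accumulation point $\uu$ is obtained by testing the discounted system against an \emph{arbitrary} Mather measure $\mu\in\Mis_0$ (regularize $\uu^\lambda$, use Fenchel and closedness of $\mu$ to get $\lambda\int u^\lambda_i\,\dd\mu\leqslant 0$, then pass to the limit); no discounted measures are needed here. Your plan instead produces the constraint only for those Mather measures arising as weak limits of the discounted measures $\mu^\lambda_y$, which a priori is not all of $\Mis_0$, so you could not conclude $\uu\in\F$. Conversely, the discounted measures $\mu^\lambda_y$ are what drive the \emph{other} inequality $\uu\geqslant\uu^0$: for $\ww\in\F$ one proves (Lemma~\ref{ineq prim}) $u^\lambda_\ell(y)\geqslant w_\ell(y)-\int w_i\,\dd\mu^\lambda_y$, and it is only after passing to a subsequential limit $\mu^\lambda_y\rightharpoonup\mu_y\in\Mis_0$ that the constraint $\int w_i\,\dd\mu_y\leqslant 0$ kicks in to give $u_\ell(y)\geqslant w_\ell(y)$. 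Your description of the lower bound (``feed $\ww$ into the discounted representation formula'') is the right idea for this half, but note it too relies on the discounted measures, not on the closedness of a fixed Mather measure. Once you realign the two halves this way, your argument coincides with the paper's.

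A minor further remark: you propose to verify directly that $\uu^0$ is a critical \emph{solution}. The paper does not do this a priori; it only needs that $\uu^0$ is a critical subsolution (immediate, as a supremum of equi--Lipschitz subsolutions), and the fact that $\uu^0$ is a solution and belongs to $\F$ falls out a posteriori from the convergence $\uu^\lambda\to\uu^0$.
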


We will characterize $\uu^0$ in terms of a generalized notion of Mather minimizing measure for HJ systems.\smallskip

Notice that the relationship between $\uu^{\lambda}$ and $\uu^{\lambda,c}$ when $c$ varies is rather straightforward: it is easily verified that 
$\uu^{\lambda,c} = \uu^{\lambda} +  \frac{c-c(\HH)}{\lambda}\1$. 
%
As a consequence, we derive from Theorem \ref{intro main teo} the following fact:

\begin{teorema}\label{intro teo consequence}
Let $\uu^{\lambda,c}$ be the solution of system \eqref{intro discounted system} with $\lambda>0$. Then, as $\lambda\to 0^+$, the 
functions $\lambda \uu^{\lambda,c}$ uniformly converge in $M$ to the constant vector $\big(c-c(\HH)\big)\1$ and the functions $\widehat\uu^{\lambda,c}:=\uu^{\lambda,c}-\min_i\min_x u^{\lambda,c}_i\1$ uniformly converge to $\uu^0-\min_i\min_x u^0_i\1$ in $M$.
\end{teorema}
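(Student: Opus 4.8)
The plan is to reduce everything to the affine identity $\uu^{\lambda,c} = \uu^{\lambda} + \frac{c-c(\HH)}{\lambda}\1$ recorded just above the statement, combined with the uniform convergence $\uu^\lambda \to \uu^0$ granted by Theorem \ref{intro main teo}. Both assertions then follow by essentially algebraic manipulations, so this is a formal corollary rather than a statement requiring new ideas.

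First, for the behavior of $\lambda\uu^{\lambda,c}$, I would multiply the identity by $\lambda$ to obtain
\[
\lambda\uu^{\lambda,c} = \lambda\uu^\lambda + \big(c-c(\HH)\big)\1 .
\]
Since $\uu^\lambda$ converges uniformly as $\lambda\to 0^+$, the family $\{\uu^\lambda\mid\lambda>0\}$ is in particular uniformly bounded, whence $\lambda\uu^\lambda \to 0$ uniformly in $M$. Letting $\lambda\to 0^+$ gives $\lambda\uu^{\lambda,c} \to \big(c-c(\HH)\big)\1$ uniformly, as claimed.

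Second, for the normalized functions, the key observation is that adding the scalar multiple $\frac{c-c(\HH)}{\lambda}\1$ of $\1$ shifts \emph{every} component $u_i^{\lambda,c}$ by the \emph{same} constant; hence this constant also appears in $\min_i\min_x u_i^{\lambda,c}$ and cancels in the difference defining $\widehat\uu^{\lambda,c}$. Concretely,
\[
\min_i\min_x u_i^{\lambda,c} = \min_i\min_x u_i^\lambda + \frac{c-c(\HH)}{\lambda},
\]
so that $\widehat\uu^{\lambda,c} = \uu^\lambda - \big(\min_i\min_x u_i^\lambda\big)\1 = \widehat\uu^\lambda$; in particular the normalized solutions do not depend on $c$.

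Finally I would invoke the uniform convergence $\uu^\lambda \to \uu^0$ once more. Since $\big|\min_x u_i^\lambda - \min_x u_i^0\big| \le \|u_i^\lambda - u_i^0\|_\infty$ and the index set $\ind$ is finite, the normalizing scalars $\min_i\min_x u_i^\lambda$ converge to $\min_i\min_x u_i^0$; combined with the uniform convergence of $\uu^\lambda$ itself, this yields $\widehat\uu^\lambda \to \uu^0 - \big(\min_i\min_x u_i^0\big)\1$ uniformly in $M$. There is no genuine obstacle here: the only point requiring (routine) care is the continuity of the normalizing functional $\uu\mapsto \min_i\min_x u_i$ with respect to uniform convergence, which is immediate from the Lipschitz estimate above.
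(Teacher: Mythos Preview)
Your proposal is correct and follows precisely the route the paper indicates: the theorem is stated there as an immediate consequence of the affine identity $\uu^{\lambda,c} = \uu^{\lambda} + \frac{c-c(\HH)}{\lambda}\1$ together with Theorem~\ref{intro main teo}, without a written-out proof. Your argument supplies exactly those details, and the only additional ingredient you use---the continuity of $\uu\mapsto\min_i\min_x u_i$ under uniform convergence---is indeed routine.
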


Theorem \ref{intro teo consequence} for $c=0$ can be restated by saying that the {\em ergodic approximation} selects a specific critical solution in the limit. The ergodic approximation 
is a classical technique introduced in \cite{LPV}  for the case of a single equation (i.e. with $m=1$ and $B=0$). Since then, it has been 
extended and applied to many different 
settings, including the case of weakly coupled systems of Hamilton--Jacobi equations, see \cite{leyetal,mitaketran}. This technique is typically employed to show the existence and uniqueness of the critical value $c(\HH)$ and the existence of a solution of the corresponding critical problem. 
The latter is usually obtained by renormalizing the discounted solutions so to produce a family of equi--bounded and equi--Lipschitz functions satisfying suitable perturbed discounted problems (for instance, the family $\{\widehat\uu^{\lambda,0}\mid\lambda>0\}$ in the case of HJ systems) and by taking limits, {\em along subsequences} as $\lambda\to 0^{+}$, of these renormalized functions.  The fact that the limit is unique has been recently established in \cite{DFIZ1} for the case of a single equation  by using tools and results issued from weak KAM Theory. 
This  selection principle has been subsequently generalized in different directions, see  \cite{AEEI, DFIZ2, GoMiTr-discount, IsMiTr-discount1, IsMiTr-discount2, MiTr-discount}, testifying the interest for the issue.\smallskip 

The extension of the selection principle  to HJ systems provided in the present work is based on a generalization of the theory
of Mather minimizing measures, which is new in this setting and 
enriches the frame of analogies with weak KAM theory for scalar Eikonal equations.
This stream of research was initiated  in \cite{leyetal} with the proof of the long--time
convergence of the solutions to evolutive HJ systems, under
hypotheses close to  \cite{NR}. Other outputs in this vein can be
found in a series of works including \cite{Ng,MT2}. The links
with weak KAM theory were further made precise by the authors
of the present paper in \cite{DavZav14} where, by purely using PDE tools and viscosity solution techniques, an appropriate  notion of Aubry set for systems was
given and some relevant properties of it were generalized from the
scalar case. A dynamical and variational point of view of the matter, integrating
the PDE methods, was later brought in by \cite{SMT14,ISZ}.  This angle allowed the authors to detect the
stochastic character of the problem, displayed by the random
switching nature of the dynamics and by the role of an adapted action functional. Representation
formulae for viscosity (sub)solutions of the critical systems and a
cycle characterization of the Aubry set were derived.

The random frame introduced in \cite{SMT14} and subsequently developed in \cite{DSZ} is the starting point of our analysis. 
It is exploited to provide suitable random representation  formulae for the solutions of both the critical and the discounted system. 
A point that is crucial to our purposes consists in showing the existence of admissible minimizing curves in such formulae. 
This is done by making use of the results proved in \cite{DSZ} and by adapting  the construction therein employed to the discounted system case.\smallskip
 
 The paper is organized as follows: in Section \ref{sez preliminaries} we fix notations and the standing assumptions, and we provide some preliminary results on the critical and discounted systems. In Section \ref{sez representation formulae} we present the random frame in which our analysis takes place and we prove suitable random representation formulae for the solutions of the critical and discounted systems. In Section \ref{sez Mather theory} we generalize the theory of Mather minimizing measures to the case of HJ systems. Section \ref{sez main teo} contains the proof of Theorem \ref{intro main teo}.

\smallskip\indent{\textsc{Acknowledgements. $-$}}
This research was initiated in May 2015, while the first author was visiting, as Professeur Invit\'e, the Institut de Math\'ematiques de Jussieu,  
Universit\'e Pierre et Marie Curie (Paris), that he gratefully acknowledges for the financial support and hospitality.  
\numberwithin{teorema}{section}
\numberwithin{equation}{section}

\begin{section}{Preliminaries}\label{sez preliminaries}

\begin{subsection}{Notations}\label{sez notation}
In this work, we will denote by $M$ the $N$--dimensional flat torus $\T^N$, where $N$ is an integer number. This is done to simplify the notation and to be consistent with the references we will use. We remark however that our results and proofs keep holding, {\em mutatis mutandis}, whenever $M$ is a 
compact connected Riemannian manifold without boundary.  The associated Riemannian distance on $M$ will  be denoted by $d$. 
We denote by $TM$ the tangent bundle and by $(x,v)$ a point of $TM$, with $x\in M$ and $v\in T_x M=\R^N$. 
In the same way, a point of the cotangent bundle $T^*M$ will be denoted by $(x,p)$, with $x\in M$ and 
$p\in T_x^* M$ a linear form on the vector space $T_x M$. The latter will be identified with the vector $p\in\R^N$ such that 
\[
 p(v)=\langle p,v\rangle\qquad\hbox{for all $v\in T_xM=\R^N$,}
\]
where $\langle\,\cdot\;, \cdot\,\rangle$ denotes the Euclidean scalar product in $\R^N$. The fibers $T_xM$ and $T_x^*M$ are endowed with the Euclidean norm $|\cdot|$, for every $x\in M$.
%

With the symbols $\N$ and  $\R_+$ we will refer to the set of positive integer numbers and 
nonnegative  real numbers, respectively.  We say
that a property holds {\em almost everywhere} ($a.e.$ for short)
in a subset $E$ of  $M$ (respectively, of $\R$) if it holds up to a {\em negligible} subset of $E$, i.e. a
subset of zero $N$--dimensional (resp., $1$--dimensional) Lebesgue measure.
%
%

Given a continuous function $u$ on $M$ and a point $x_0\in M$, we will denote by $D^-u(x_0)$ and $D^+u(x_0)$ the set of {\em subdifferential}  and {\em superdifferential} of $u$ at $x_0$, respectively.
%
%
When $u$ is locally Lipschitz in $ M $, we will denote by $\partial^c u(x_0)$ the set of {\em Clarke's generalized gradient} of $u$ at $x_0$, see \cite{Cl} for a detailed presentation of the subject.
%
%

\indent 
We will denote by $\|g\|_\infty$ the usual $L^\infty$--norm of $g$, where the latter is a   
measurable real function defined on $ M $. 
We will denote by $\big(\D{C}( M )\big)^m$ the Banach space of continuous functions $\uu=(u_1,\dots,u_m)^T$ 
from $ M $ to $\R^m$, endowed with the norm 
\[
 \|\uu\|_\infty=\max_{1\leqslant i\leqslant m}\|u_i\|_\infty,\qquad\hbox{$\uu\in\big(\D{C}( M )\big)^m$}.
\]
We will write $\uu^n\ucv \uu$ in $ M $ to mean that $\|\uu^n-\uu\|_\infty\to 0$. A function $\uu\in \big(\D{C}( M )\big)^m$ will be termed Lipschitz continuous if each of its components is $\kappa$--Lipschitz continuous, for some $\kappa>0$. Such a constant $\kappa$ will be called a {\em Lipschitz constant} for $\uu$. The space of all such functions will be denoted by $\big(\D{Lip}( M )\big)^m$. 
\smallskip\par

We will denote by $\1=(1,\cdots,1)^T$ the vector of $\R^m$ having all components equal to 1, where the upper--script symbol $T$ stands for the transpose. We consider the following partial relations between elements  $\mathbf{a},\mathbf{b}\in\R^m$:
$\aaa \leqslant \mathbf{b}$  if $a_i\leqslant b_i$ (resp., $<$) for every $i\in\ind$. Given 
two functions $\uu,\vv: M \to\R^m$, we will write $\uu \leqslant \vv$  in $ M $ (respectively, $<$) to mean that $\uu(x)\leqslant \vv(x)$ \big(resp., $\uu(x)<\vv(x)$\big)  {for every $x\in M $}. 
\smallskip\par
\end{subsection}

\begin{subsection}{Weakly coupled systems}\label{sys}
Throughout the paper, we will assume the Hamiltonians $H_i$ to be continuous functions on $ T^*M$ satisfying, for every $i \in \ind$,
\smallskip

\begin{itemize}
    \item[(H1)] (convexity) \ $p\mapsto H_i(x,p)\qquad\hbox{is  convex on $\R^N$ for any
    $x\in  M $;}$ \smallskip\\
     \item[(H2)] (coercivity) \ there exist two coercive functions $\alpha,\beta:\R_+\to\R$ such that
     \[
     \qquad \alpha(|p|)\leqslant H_i(x,p)\leqslant \beta(|p|)\quad \hbox{for every $(x,p)\in T^*M$}.
     \]
    \end{itemize}
For our analysis, it will be convenient and non restrictive, see Section \ref{sez representation formulae}, to reinforce this coercivity condition in favor of the following:
\begin{itemize}
     \item[(H2$'$)] (superlinearity) \ there exist two superlinear functions $\alpha,\beta:\R_+\to\R$ such that
     \[
     \qquad \alpha(|p|)\leqslant H_i(x,p)\leqslant \beta(|p|)\quad \hbox{for every $(x,p)\in T^*M$}.
     \]
    \end{itemize}
We recall that a function $f:\R_+\to\R$ is termed {\em coercive} if $f(h)\to +\infty$ as $h\to +\infty$, while it is termed {\em superlinear} if  $f(h)/h\to +\infty$ as $h\to +\infty$. 

In the sequel, we will denote by $\partial_p H_i(x,p)$ the set of subdifferentials at $p$ of the function $p\mapsto H_i(x,p)$ in the sense of convex analysis. 
We recall that, due to conditions (H1)--(H2), the function  $H_i(x,\cdot)$ is locally Lipschitz in $T_x^*M$, with a local Lipschitz constant that can be chosen independent of $x\in M$. In particular, the sets $\{\partial_p H_i(x,p)\mid x\in M,\,|p|\leqslant R\,\}$ are uniformly bounded for fixed $R>0$.\smallskip

The {\em coupling matrix} $B=(b_{ij})$ has dimensions  $m\times m$
and satisfies
\begin{itemize}
 \item[(B1)] $b_{ij}\leqslant 0\ \hbox{for $j\not=i$,}\quad\quad \sum_{j=1}^m b_{ij}= 0$;\medskip
 \item[(B2)] $B$ is irreducible, i.e. for every subset $\I\subsetneq\{1,\dots,m\}$ there exist $i\in\I$ and $j\not\in\I$ such that $b_{ij}\not=0$.\smallskip
\end{itemize}

For $\lambda\geqslant 0$ and $c\in\R$, we consider the following weakly coupled system of Hamilton--Jacobi equations
\begin{equation}\label{wcoupled system}
(B+\lambda\Id)\uu +\HH(x, D \uu) = c\1\qquad\hbox{in $M$},
\end{equation}
where we have adopted the notation $ \HH(x, D \uu) = \big(H_1(x, D u_1), \cdots ,H_m(x, D u_m) \big) ^T$. 

Let $\uu\in\left(\D{C}( M)\right)^m$. We will say that $\uu$ is a {\em viscosity subsolution} of \eqref{wcoupled system} if the following inequality holds for every $(x,i)\in M\times\ind$
\begin{equation*}
H_i(x,p)+\big((B(x)+\lambda\Id)\uu(x)\big)_i\leqslant c\quad\hbox{for every $p\in D^+ u_i(x)$.}
\end{equation*}
We will say that $\uu$ is a {\em viscosity supersolution} of \eqref{wcoupled system} if the following inequality holds for every $(x,i)\in M\times\ind$
\begin{equation*}
H_i(x,p)+\big((B(x)+\lambda\Id)\uu(x)\big)_i\geqslant c\quad\hbox{for every $p\in D^- u_i(x)$.}
\end{equation*}
We will say that $\uu$ is a {\em viscosity solution} if it is both a sub and a supersolution. In the sequel, solutions, subsolutions and supersolutions will be always meant in the viscosity sense, hence the adjective {\em viscosity} will be  omitted. 

When $\lambda=0$, there exists a unique value $c$ for which the system \eqref{wcoupled system} admits solutions, hereafter denoted by $c(\HH)$ and termed {\em critical}. In fact, $c(\HH)$ can be also characterized as 
\begin{equation}\label{eq characterization c(H)}
c(\HH)=\min\left\{c\in\R\,|\,\hbox{system \eqref{wcoupled system} with $\lambda=0$ admits subsolutions}   \right\},
\end{equation}
see \cite{DavZav14} for a detailed analysis. 

We recall from \cite{DavZav14} the following result, that will be crucial for our analysis:

\begin{prop}\label{prop a priori Lip}
Let $\uu=(u_1,\dots,u_m)^T \in \big(\D{C}( M )\big)^m$ be a  subsolution of
\eqref{wcoupled system} with $\lambda=0$ and $c\in\R$.
Then there exist  constants $C_c $ and $\kappa_c $, only depending on $c$, on the Hamiltonians $H_1,\dots, H_{m}$ and on the coupling matrix $B$, such that
\begin{itemize}
 \item[\em (i)]  \quad $\|u_i-u_j\|_\infty\leqslant C_c \ \quad\qquad\hbox{for every $i,\,j\in\ind$;}$\medskip
 \item[\em (ii)] \quad $\uu$ is $\kappa_c$--Lipschitz continuous in $ M $.
\end{itemize}
\end{prop}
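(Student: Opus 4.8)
The plan is to establish the two estimates in cascade: first the bound (i) on the differences $u_i-u_j$, and then the uniform Lipschitz bound (ii), which will come quickly from (i) together with the coercivity (H2). The starting observation is that, thanks to (B1), the coupling term sees only the differences of the components: since $\sum_j b_{ij}=0$ and $b_{ij}\le 0$ for $j\ne i$, one has $b_{ii}=\sum_{j\ne i}|b_{ij}|$ and hence
\[
(B\uu)_i(x)=\sum_{j\ne i}|b_{ij}|\big(u_i(x)-u_j(x)\big)\qquad\text{for every }i\in\ind .
\]
Moreover each $u_i$ is a subsolution of $H_i(x,Du_i)\le c-(B\uu)_i(x)$, whose right-hand side is continuous, hence bounded, on the compact manifold $M$; by (H2) this already forces $\uu$ to be Lipschitz with some a priori constant (possibly depending on $\uu$), which is enough to invoke Rademacher's theorem and argue at points of differentiability.

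The next step is to extract a one-sided pointwise control on the coupling term. Writing $-A:=\inf_i\inf_{T^*M}H_i$, which is finite since each $H_i$ is continuous and coercive, the subsolution inequality evaluated at a differentiability point $x$ of $u_i$ gives $(B\uu)_i(x)\le c-H_i(x,Du_i(x))\le c+A=:K$. As the left-hand side is continuous in $x$, this holds for every $x\in M$ and every $i\in\ind$, that is $\sum_{j\ne i}|b_{ij}|(u_i(x)-u_j(x))\le K$ everywhere, with $K$ depending only on $c$ and the Hamiltonians.

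The heart of the matter, and the point where irreducibility (B2) enters, is the passage from this family of one-sided inequalities to (i). Let $\delta:=\max_{i,k}\max_x\big(u_i(x)-u_k(x)\big)$ be attained at $(i_0,k_0,x_0)$, so that $i_0$ is a maximizing and $k_0$ a minimizing index of $\uu(x_0)$. Set $d_i:=u_{i_0}(x_0)-u_i(x_0)\ge 0$, so that $d_{i_0}=0$, $\max_i d_i=d_{k_0}=\delta$, and, by the inequality above at $x_0$, $\sum_{j\ne i}|b_{ij}|(d_j-d_i)\le K$ for every $i$. Since all $d_j\ge 0$, keeping a single summand yields the propagation estimate: if $b_{ip}\ne 0$ then $d_p\le (K+\beta_{\max}\,d_i)/b_{\min}$, where $\beta_{\max}:=\max_i b_{ii}$ and $b_{\min}:=\min\{|b_{ij}|:i\ne j,\ b_{ij}\ne 0\}>0$ depend only on $B$. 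By (B2) the directed graph with edges $\{i\to j:b_{ij}\ne 0\}$ is strongly connected, so every index is joined to $i_0$ by a path of length at most $m-1$; iterating the propagation estimate along such a path, starting from $d_{i_0}=0$, bounds every $d_i$, and hence $\delta$, by the $(m-1)$-th iterate at $0$ of the affine map $t\mapsto (K+\beta_{\max}t)/b_{\min}$. This fixed finite number is the desired $C_c$, and since $\|u_i-u_j\|_\infty\le\delta$ we obtain (i). I expect this propagation argument to be the main obstacle, as it is precisely where the combinatorial structure of $B$ has to be converted into a quantitative, $\uu$-independent bound.

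Finally, (ii) follows routinely from (i). With the difference bound in hand, $|(B\uu)_i(x)|\le\beta_{\max}C_c$ for all $x$ and $i$, so at each differentiability point the subsolution inequality gives $\alpha(|Du_i(x)|)\le H_i(x,Du_i(x))\le c+\beta_{\max}C_c=:L$. Since $\alpha$ is coercive, the sublevel set $\{s\ge 0:\alpha(s)\le L\}$ is bounded, whence $|Du_i(x)|\le\kappa_c$ for a constant $\kappa_c$ depending only on $L$, that is on $c$, the $H_i$ and $B$. As this holds almost everywhere and each $u_i$ is Lipschitz, $\uu$ is $\kappa_c$-Lipschitz on $M$, which is (ii).
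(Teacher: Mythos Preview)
The paper does not give a proof of this proposition: it is stated with the preamble ``We recall from \cite{DavZav14} the following result'' and no argument is supplied. Your proposal therefore cannot be compared to a proof in the paper, but it can be assessed on its own, and it is correct.

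A few comments on the argument. Your use of (B1) to rewrite $(B\uu)_i=\sum_{j\ne i}|b_{ij}|(u_i-u_j)$ and to extract the one-sided bound $(B\uu)_i\le K:=c-\inf_{i}\inf_{T^*M}H_i$ is clean; the finiteness of $K$ indeed follows from continuity of $H_i$ on compact cotangent balls together with coercivity. The propagation step is the right place for (B2): as you observe, (B2) is exactly the statement that from any proper subset of indices there is an outgoing edge in the graph $\{i\to j:b_{ij}\ne 0\}$, and this forces every index to be reachable from $i_0$ within at most $m-1$ steps. Iterating the affine map $t\mapsto (K+\beta_{\max}t)/b_{\min}$ along such a path, starting from $d_{i_0}=0$, produces a bound on $\delta=d_{k_0}$ that depends only on $c$, the $H_i$, and $B$; this yields (i). Part (ii) then follows immediately from (i) and (H2), as you wrote. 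The only place one might tighten the exposition is the preliminary remark that $\uu$ is Lipschitz (with an a priori constant possibly depending on $\uu$): this is the standard fact that continuous viscosity subsolutions of coercive Hamilton--Jacobi equations with bounded right-hand side are Lipschitz, and it is needed in order to evaluate the subsolution inequality at differentiability points via Rademacher.
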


We proceed presenting some basic facts about the discounted system, i.e. system \eqref{wcoupled system} when $\lambda>0$. 
The following existence and uniqueness result depends on the fact that the matrix $B+\lambda \Id$ is non degenerate as soon as $\lambda >0$.

\begin{prop}\label{comparison}
Let $\lambda >0$ and $c\in \R$. Let $\vv, \uu\in \big(\D{C}( M )\big)^m$ be respectively a subsolution and a supersolution to \eqref{wcoupled system}, then $\vv\leqslant \uu$. In particular, there exists a unique solution $\uu^{\lambda, c}$ in $\vC$.
\end{prop}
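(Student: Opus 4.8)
The plan is to prove a comparison principle for the discounted system \eqref{wcoupled system} with $\lambda>0$, from which existence and uniqueness will follow by Perron's method. Suppose $\vv$ is a subsolution and $\uu$ a supersolution, and argue by contradiction assuming that $\max_{i,x}\big(v_i(x)-u_i(x)\big)>0$. The key structural fact I would exploit is the sign and degeneracy condition (B1): the off-diagonal entries $b_{ij}$ are nonpositive, and the rows of $B$ sum to zero, so $b_{ii}=-\sum_{j\neq i}b_{ij}\geqslant 0$. This is exactly what makes the zeroth-order coupling term monotone in the right way for a comparison argument.

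First I would localize the maximum. Let $\delta:=\max_{1\leqslant i\leqslant m}\max_{x\in M}\big(v_i(x)-u_i(x)\big)$, and suppose for contradiction $\delta>0$. Pick an index $i$ and a point $x_0$ realizing this maximum. Here $\delta=v_i(x_0)-u_i(x_0)$. The standard viscosity-solution technique at a maximum of the difference of a sub- and supersolution is to use the doubling-of-variables method (introducing $\Phi_\eps(x,y)=v_i(x)-u_i(y)-|x-y|^2/\eps^2$ and passing to the limit $\eps\to0^+$), which produces a common point $x_0$ and a common element $p\in D^+v_i(x_0)\cap D^-u_i(x_0)$ at which both viscosity inequalities can be tested against the same gradient. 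Testing the subsolution inequality for $\vv$ and the supersolution inequality for $\uu$ at $(x_0,p)$ and subtracting, the Hamiltonian terms $H_i(x_0,p)$ cancel, leaving
\[
\big((B+\lambda\Id)\vv(x_0)\big)_i-\big((B+\lambda\Id)\uu(x_0)\big)_i\leqslant 0.
\]

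Now I would unpack the coupling term. Writing $\ww:=\vv-\uu$, the left-hand side equals $\sum_{j=1}^m b_{ij}w_j(x_0)+\lambda w_i(x_0)$. Since $\sum_j b_{ij}=0$, I rewrite the first sum as $\sum_{j\neq i}b_{ij}\big(w_j(x_0)-w_i(x_0)\big)$. Because $i$ and $x_0$ realize the global maximum $\delta$ of $\ww$ over all indices and points, we have $w_j(x_0)\leqslant\delta=w_i(x_0)$ for every $j$, so $w_j(x_0)-w_i(x_0)\leqslant 0$; combined with $b_{ij}\leqslant 0$ for $j\neq i$, every term $b_{ij}\big(w_j(x_0)-w_i(x_0)\big)$ is nonnegative. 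Hence the inequality above forces $\lambda w_i(x_0)=\lambda\delta\leqslant 0$, contradicting $\lambda>0$ and $\delta>0$. This yields $\vv\leqslant\uu$ in $M$.

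The main obstacle I anticipate is making the doubling-of-variables step fully rigorous in the system setting: one must ensure that the maximizing index $i$ can be fixed throughout the $\eps\to0^+$ limit and that the standard Crandall--Ishii lemma applies componentwise, since the Hamiltonians act diagonally while the coupling is purely zeroth order. This separation is precisely what makes the argument tractable, and for uniqueness and existence I would then invoke Perron's method: the irreducibility (B2) and the a priori bounds are not needed for comparison itself, but the existence of a subsolution and a supersolution bracketing a candidate (easily produced as large positive and negative constant vectors, using $B\1=0$ and coercivity) lets Perron's construction yield the desired continuous solution $\uu^{\lambda,c}$, whose uniqueness is immediate from the comparison principle just established.
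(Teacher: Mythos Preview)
Your argument is correct and follows the standard route for comparison in weakly coupled first-order systems: localize the maximum of $\vv-\uu$ over both space and index, use doubling of variables to cancel the Hamiltonian terms, and then exploit the sign structure (B1) of $B$ to isolate the term $\lambda\delta$, which yields the contradiction. The one cosmetic point is that for a first-order system the full Crandall--Ishii lemma is unnecessary; the elementary doubling-of-variables argument already gives the needed inequalities at the approximate maxima, and the Lipschitz bound on subsolutions (available via coercivity (H2) once $\vv$ is continuous on the compact $M$) keeps the test momenta $p_\eps$ bounded so that the Hamiltonian terms cancel in the limit.

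By contrast, the paper does not spell any of this out: it simply refers to Proposition~2.8 of \cite{DavZav14} for the comparison principle and then invokes Perron's method for existence. Your proof is thus a self-contained version of what that citation provides. The practical upshot is the same (comparison plus Perron gives existence and uniqueness of $\uu^{\lambda,c}$), but your write-up makes explicit the role of the row-sum condition $\sum_j b_{ij}=0$ and the off-diagonal sign condition, which is exactly the mechanism the reader would otherwise have to look up.
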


\begin{proof}
The first assertion is a consequence of Proposition 2.8 in \cite{DavZav14}, while the second follows via a standard application of Perron's method. 
\end{proof}

As already mentioned in the Introduction, the relationship between those solutions when $c$ varies is given by \ $\uu^{\lambda, c} = \uu^{\lambda,c'} +\frac{c-c'}{\lambda}\1$. In particular, it follows that as $\lambda \to 0^+$, the family $\uu^{\lambda, c}$ may be bounded at most for one value $c$. 

We now explain why this is the case for $c=c(\HH)$. 

\begin{prop}\label{prop u lambda}
Let us denote by $\uu^\lambda$ the unique solution in $\vC$ of \eqref{wcoupled system} with $c=c(\HH)$ and $\lambda>0$.  
Then the functions  $\{\uu^\lambda\,|\,\lambda>0\,\}$ are equi--Lipschitz and equi--bounded. In particular, $\|\lambda \uu^\lambda\|_\infty\to 0$ as $\lambda\to 0^+$. 
\end{prop}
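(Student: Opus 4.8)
The plan is to build explicit sub- and supersolution barriers by translating fixed functions through constant multiples of $\1$, and then to feed the resulting bounds into Proposition \ref{prop a priori Lip}. What makes such translations admissible is the sign condition (B1), which gives $B\1=0$, so that adding $a\1$ only perturbs the zero-order term $(B+\lambda\Id)\uu$ by $\lambda a\1$.

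First I would show that $\lambda\uu^\lambda$ is uniformly bounded by comparing $\uu^\lambda$ with constant vectors. Set $K_0=\max_{i,x}|H_i(x,0)|$. For $a\in\R$ the constant function $a\1$ has $D^+(a\1)_i=D^-(a\1)_i=\{0\}$, and since $B\1=0$ it satisfies $(B+\lambda\Id)(a\1)+\HH(x,0)=\lambda a\1+\HH(x,0)$. Hence $a\1$ is a subsolution of \eqref{wcoupled system} with $c=c(\HH)$ as soon as $\lambda a\leqslant c(\HH)-K_0$, and a supersolution as soon as $\lambda a\geqslant c(\HH)+K_0$. Applying Proposition \ref{comparison} with $a=(c(\HH)\mp K_0)/\lambda$ yields $c(\HH)-K_0\leqslant \lambda u_i^\lambda(x)\leqslant c(\HH)+K_0$ for every $i,x$, so $\|\lambda\uu^\lambda\|_\infty\leqslant |c(\HH)|+K_0=:K$ uniformly in $\lambda>0$. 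Rewriting the discounted system as $B\uu^\lambda+\HH(x,D\uu^\lambda)=c(\HH)\1-\lambda\uu^\lambda$ and using this bound, each $\uu^\lambda$ is a subsolution of the critical system \eqref{wcoupled system} (with $\lambda=0$) for the value $c=c(\HH)+K$. Proposition \ref{prop a priori Lip}(ii) then supplies a single Lipschitz constant $\kappa_{c(\HH)+K}$ valid for all $\uu^\lambda$, which gives the equi--Lipschitz claim (and part (i) keeps the components within $C_{c(\HH)+K}$ of each other).

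It remains to bound $\uu^\lambda$ itself, which is the crux. Here I would fix a solution $\uu^*$ of the critical system \eqref{intro critical system}, whose existence follows from the $\lambda=0$ theory recalled from \cite{DavZav14}. Because $B\1=0$, for any constant $a$ the translate $\uu^*+a\1$ satisfies $(B+\lambda\Id)(\uu^*+a\1)+\HH(x,D\uu^*)=c(\HH)\1+\lambda(\uu^*+a\1)$ in the viscosity sense, with $D^\pm(u_i^*+a)=D^\pm u_i^*$. Choosing $a=-\max_{i,x}u_i^*$ makes $\uu^*+a\1\leqslant 0$ componentwise, so the perturbation $\lambda(\uu^*+a\1)$ is nonpositive and $\uu^*+a\1$ is a subsolution of the discounted system; choosing $a=-\min_{i,x}u_i^*$ makes it a supersolution. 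Two applications of Proposition \ref{comparison} then sandwich $\uu^\lambda$ between these two fixed functions, giving $\|\uu^\lambda\|_\infty\leqslant C$ with $C$ independent of $\lambda$. Finally $\|\lambda\uu^\lambda\|_\infty\leqslant \lambda\,C\to 0$ as $\lambda\to 0^+$, which is the last assertion.

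The main obstacle is precisely the equi--boundedness of $\uu^\lambda$, as opposed to the easy uniform bound on $\lambda\uu^\lambda$: it is here that one must invoke the existence of a critical solution and exploit $B\1=0$ to turn its vertical translates into discounted barriers with the correct sign of the perturbation term. The equi--Lipschitz part, by contrast, is a direct consequence of Proposition \ref{prop a priori Lip} once $\lambda\uu^\lambda$ is controlled.
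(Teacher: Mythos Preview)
Your argument is correct and follows essentially the same route as the paper: the paper also sandwiches $\uu^\lambda$ between translates $\uu\pm A\1$ of a fixed critical solution to obtain equi--boundedness, and uses a constant vector as a discounted subsolution to bound $-\lambda\uu^\lambda$ from above and then invoke Proposition~\ref{prop a priori Lip} for the equi--Lipschitz constant. The only cosmetic difference is the order---the paper establishes equi--boundedness first and then equi--Lipschitz, while you do the reverse and bound $\lambda\uu^\lambda$ two--sidedly with constants before appealing to the critical solution.
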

\begin{proof}
Let $\uu\in\vC$ be a solution of \eqref{wcoupled system} with $c=c(\HH)$ and $\lambda=0$.  By taking $A>0$ big enough, it follows that $\overline u:=\uu+A\1$ takes only positive values and $\underline\uu:= \uu-A\1$ takes only negative values. Therefore, $\overline\uu$ and $\underline\uu$ are respectively a super and a subsolution of \eqref{wcoupled system} with $c=c(\HH)$ for any parameter $\lambda >0$. By Proposition \ref{comparison} we infer that $\underline\uu \leqslant \uu^\lambda \leqslant \overline \uu$ in $M$ for all $\lambda>0$, thus proving the asserted equi--bounded character of the $\{\uu^\lambda\mid\lambda>0\}$. 

Let us now prove that $\uu^\lambda$ is Lipschitz and its Lipschitz constant can be chosen independent of $\lambda>0$.  
Let us set $b=\max_{i\in\ind}\max_{x\in M}H_i(x,0)$. The function 
$\ww\equiv-\1\big(b-c(H)\big)/\lambda$ is obviously a subsolution of \eqref{wcoupled system} with $c=c(\HH)$.
By Proposition \ref{comparison}, 
we must have $\lambda \uu^\lambda\geqslant \big(-b+c(H)\big)\1$ in $M$, hence
\[
B\uu^\lambda +\HH(x, D \uu^\lambda) = -\lambda \uu^\lambda + c(\HH)\1 \leqslant b\1 \qquad\hbox{in $M$}
\]
in the viscosity sense. According to Proposition \ref{prop a priori Lip} we conclude that $\uu^\lambda$ is $\kappa$--Lipschitz, where the constant $\kappa$ only depends on the constant $b$, on the Hamiltonians $H_1,\dots, H_{m}$ and on the coupling matrix $B$.
\end{proof}

\begin{oss}\label{oss beta}
Note that $b:=\max_{x\in M}H_i(x,0) \geqslant c(H)$. This readily follows from the characterization of $c(\HH)$ given in \eqref{eq characterization c(H)} after noticing 
that the null function is a subsolution of \eqref{wcoupled system} with $\lambda=0$ and $c=b$. \\
\end{oss}
\end{subsection}

\end{section}

\section{Random representation formulae for solutions}\label{sez representation formulae}

In this section, we will establish suitable representation formulae for the solution of the following system
\begin{equation}\label{system discounted critical}
(B+\lambda\Id)\uu +\HH(x, D \uu) = c(\HH)\1\qquad\hbox{in $M$}
\end{equation}
when either $\lambda>0$ or $\lambda=0$. This will be done by adopting the random frame introduced in \cite{DSZ} and by 
adapting the strategy therein employed to the case at issue. In the sequel, we shall refer to the system \eqref{system discounted critical} and  its corresponding (sub, super) solutions as {\em discounted} when $\lambda>0$, {\em critical} when $\lambda=0$. 

To implement this program, we need to assume that the Hamiltonians satisfy the stronger growth assumption (H2$'$). We want to explain 
here why this is not restrictive for our analysis. According to the proof of Proposition \ref{prop u lambda}, the discounted 
solutions $\uu^\lambda$ satisfy 
\begin{equation*}\label{system discounted critical}
B\uu^\lambda +\HH(x, D \uu^\lambda) \leqslant b\1\qquad\hbox{in $M$}
\end{equation*}
in the viscosity sense with $b:=\max_i\max_{x}H_i(x,0)$. In view of Remark \ref{oss beta}, this is also true for the (sub-)solutions of the critical system. Therefore all these functions are $\kappa$--Lipschitz continuous, with $\kappa=\kappa_b$ chosen according to Proposition \ref{prop a priori Lip}. We can therefore modify each Hamiltonian $H_i$ outside the compact set  $K:=\{(x,p)\in T^*M\ |\ |p|\leqslant \kappa\}$ to obtain a new Hamiltonian $\widetilde H_i$ which is still continuous and convex, and satisfies the stronger growth condition (H2$'$).  Since $H_i\equiv \widetilde H_i$ on $K$ for each $i\in\ind$, it is easily seen that $c(\HH)=c(\widetilde \HH)$ and the solutions of the corresponding critical and discounted systems are the same.

In the remainder of the paper, we will therefore assume each Hamiltonian $H_i$ to be convex and superlinear in $p$, i.e. hypotheses (H1) and   (H2$'$) will be in force. This allows us to introduce the associated Lagrangian $L_i: TM\to \R$ defined as follows:
\begin{equation}\label{def L}
L_i(x,v):=\sup_{p\in\R^N}\left\{\langle p, v\rangle - H_i(x,p)\right\}\qquad\hbox{for every $(x,v)\in TM$}.
\end{equation}
As well known, $L_i$ satisfies properties analogous to (H1)--(H2$'$). By definition of $L_i$ we derive 
\[
H_i(x,p)+L_i(x,v) \geqslant \langle p,q\rangle\qquad\hbox{for all $(x,p)\in T^*M$ and $(x,v)\in TM$,}
\]
which is known as {\em Fenchel's inequality}.

\subsection{Random frame}\label{sez prob}
We briefly recall  the random frame in which our
analysis takes place, see \cite{DSZ} for more details.  
We take as
sample space $\Omega$ the space of paths $\omega:\R_+\to\ind$\ \ 
 that are right--continuous and possess
left--hand limits (known in
literature as {\em c\`{a}dl\`{a}g paths}, a French acronym for {\em
continu \`a droite, limite \`a gauche}, see Billingsley's book
\cite{Bill99} for a detailed treatment of the topic). By
c\`{a}dl\`{a}g property  and the fact that  the range of
$\omega\in\Omega$ is finite, the points of discontinuity of  any
such path are isolated and consequently finite in compact intervals
of $\R_+$ and countable (possibly finite) in the whole of $\R_+$. We
call them {\em jump times} of $\omega$.

The space  $\Omega$ is endowed  with a distance, named after  {\em
Skorohod}, see \cite{Bill99},  which turns it into a Polish space.
We denote by $\F$ the corresponding Borel  $\sigma$--algebra and,
for every $t\geqslant 0$, by $\pi_t:\Omega\to\ind$ the map that
evaluates each $\omega$ at $t$, i.e. $\pi_t(\omega)=\omega(t)$ for
every $\omega\in\Omega$.
It is known that $\F$ is the minimal $\sigma$--algebra that makes
all the functions $\pi_t$ measurable, i.e.
 $\pi_t^{-1}(i)\in\F$ for every $i\in\ind$ and $t\geqslant 0$. 
%
%

Let us now fix an $m\times m$ matrix $B$ satisfying assumption
(B1)--(B2). We record that $\e^{-tB}$ is a stochastic matrix for every $t\geqslant 0$, namely
a matrix with nonnegative entries and with each row summing to 1. We endow $\Omega$ of a probability measure $\PP$ defined on
the $\sigma$--algebra $\F$ in such a way that the right--continuous
process $\left(\pi_t\right)_{t\geqslant 0}$ is a {\em Markov chain
with generator matrix $-B$}, i.e. it satisfies the Markov property
\begin{eqnarray}\label{Markov property}
\PP\big(\omega(t_k)=i_k\,|\,\omega(t_1)=i_1,\dots,\omega(t_{k-1})=i_{k-1}\,\big)
= \left(\e^{-B(t_k-t_{k-1})}\right)_{i_{k-1}i_k}
\end{eqnarray}
for all times $0\leqslant t_1<t_2<\dots<t_k$, states
$i_1,\dots,i_k\in\ind$ and $k\in\N$. We will denote by $\PP_i$ the
probability measure $\PP$ conditioned to the event $\Omega_i:=\{\omega\in\Omega\mid \omega(0)=i\}$ and write $\EE_i$ for the
corresponding expectation operators.  It is easily seen that the Markov property
\eqref{Markov property} holds with $\PP_i$ in place of $\PP$, for every $i\in\ind$.

%

In the sequel, we will call
{\em random variable} a  map $X:(\Omega,
\F)\to\big(\FF,\Bor(\FF)\big)$, where $\FF$ is a Polish space and
$\Bor(\FF)$  its Borel $\sigma$--algebra, satisfying
$X^{-1}(A)\in\F$ for every $A\in\Bor(\FF)$.
%
Let us denote by
$\curves$ the Polish space of continuous paths taking values in $M$, endowed with a metric that
induces the topology of local uniform convergence in $\R_+$.

We call  {\em admissible curve}  a random variable $\gamma:\Omega\to
\curves$ such that
\begin{itemize}
 \item[\em (i)] it  is uniformly (in $\omega$) locally  (in $t$) absolutely continuous,
 i.e. given any bounded interval $I$ and $\eps >0$,  there is $\delta_\eps >0$
 such that
 \begin{equation}\label{h1 control}
  \sum_j (b_j-a_j) < \delta_\eps  \; \Rightarrow  \; \sum_j d\big(\gamma(b_j,\omega), \gamma(a_j,\omega)\big)
  < \eps
   \end{equation}
   for any finite family  $\{(a_j,b_j)\}$ of pairwise disjoint intervals
   contained in I and for any $\omega \in \Omega$;
 \item[\em (ii)]  it is {\em nonanticipating}, i.e. for any $t \geqslant 0$
 \begin{equation}\label{h2 control}
 \omega_1\equiv\omega_2\ \hbox{in $[0,t]$}\quad\Rightarrow \gamma(\cdot,\omega_1)\equiv \gamma(\cdot,\omega_2)\ \hbox{in $[0,t]$}.\medskip
 \end{equation}
\end{itemize}
We will say that $\gamma$ is an admissible curve starting at $y\in M$ when $\gamma(0,\omega)=y$ for every $\omega\in\Omega$.\smallskip

Given an admissible curve $\gamma:\Omega\to\curves$ and $\omega\in\Omega$, we will denote by $\|\dot\gamma(\cdot,\omega)\|_{\infty}$ the $L^\infty$--norm of the derivative of the curve $\gamma(\cdot,\omega)$.\smallskip
 
We record for later use the following Dynkin's formula, see \cite[Theorem 4.7]{DSZ} for a proof:

\begin{teorema}\label{Dynkin's formula}
Let $\gb:\R_+\times M\to\R^m$ be a locally Lipschitz function and  $\gamma$ an admissible curve. Then, for every index $i\in\ind$, we have
\begin{eqnarray}\label{eq Dynkin's formula}
    \frac{\dd}{\dd t}\EE_i  \big[g_{\omega(t)}\big(t,\gamma(t,\omega)\big)\big ]_{\mbox{\Large
$|$}_{t=s}} = \EE_i \Big
[-\big(B\gb\big)_{\omega(s)}\big(s,\gamma(s,\omega)\big)  + \frac{\dd}{\dd t}
g_{\omega(s)}\big(t,\gamma(t,\omega)\big)_{\mbox{\Large $|$}_{t=s}}\Big ]
\end{eqnarray}
 for
a.e. $s \in \R_+$.\\
\end{teorema}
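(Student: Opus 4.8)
The plan is to fix the index $i\in\ind$, to set $F(t):=\EE_i\big[g_{\omega(t)}\big(t,\gamma(t,\omega)\big)\big]$, and to show that the forward difference quotients $h^{-1}\big(F(s+h)-F(s)\big)$ converge, in $L^1_{\mathrm{loc}}(\R_+,\dd s)$ as $h\to 0^+$, to the function
\begin{equation*}
R(s):=\EE_i\Big[-\big(B\gb\big)_{\omega(s)}\big(s,\gamma(s,\omega)\big)+\tfrac{\dd}{\dd t}g_{\omega(s)}\big(t,\gamma(t,\omega)\big)_{\,|_{t=s}}\Big].
\end{equation*}
Since $F$ is continuous, such $L^1_{\mathrm{loc}}$ convergence identifies $R$ with the distributional derivative of $F$, so that $F(t)=F(0)+\int_0^t R(\sigma)\,\dd\sigma$ and the identity \eqref{eq Dynkin's formula} holds for a.e.\ $s$. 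The starting point is the decomposition $F(s+h)-F(s)=\mathrm{(I)}+\mathrm{(II)}$, with
\begin{equation*}
\mathrm{(I)}=\EE_i\big[g_{\omega(s+h)}(s+h,\gamma(s+h,\omega))-g_{\omega(s)}(s+h,\gamma(s+h,\omega))\big]
\end{equation*}
isolating the change of index, and $\mathrm{(II)}=\EE_i\big[g_{\omega(s)}(s+h,\gamma(s+h,\omega))-g_{\omega(s)}(s,\gamma(s,\omega))\big]$ isolating the motion of the space--time argument at frozen index.

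For $\mathrm{(II)}$ the index is held at $\omega(s)$, so for each $\omega$ the scalar map $t\mapsto g_{\omega(s)}(t,\gamma(t,\omega))$ is absolutely continuous (the locally Lipschitz $\gb$ composed with the absolutely continuous $\gamma$), and its difference quotient converges for a.e.\ $s$ to $\tfrac{\dd}{\dd t}g_{\omega(s)}(t,\gamma(t,\omega))_{|t=s}$. To pass the limit inside $\EE_i$ I would use that admissible curves have, by \eqref{h1 control}, total variation on each bounded interval $[0,T]$ bounded by a constant $V_T$ independent of $\omega$; hence $\int_0^T|\dot\gamma(t,\omega)|\,\dd t\le V_T$ uniformly, and combined with the uniform Lipschitz constant of $\gb$ on the compact set $[0,T]\times M$ this bounds the quotients by $\ell\big(1+h^{-1}\!\int_s^{s+h}|\dot\gamma|\,\dd t\big)$, a family that is uniformly integrable on $[0,T]\times\Omega$ with respect to $\dd s\otimes\dd\PP_i$. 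Vitali's theorem then yields convergence in $L^1_{\mathrm{loc}}$, and a Fubini argument on $\dd s\otimes\dd\PP_i$ makes the $\omega$--dependent set of non--differentiability harmless for a.e.\ $s$, identifying the limit as $\EE_i\big[\tfrac{\dd}{\dd t}g_{\omega(s)}(t,\gamma(t,\omega))_{|t=s}\big]$.

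For $\mathrm{(I)}$ the Markov structure enters. Conditioning on the history up to time $s$ and using \eqref{Markov property} from the state $\omega(s)=k$, the transition probability over $[s,s+h]$ is $(\e^{-Bh})_{kj}=\delta_{kj}-b_{kj}h+O(h^2)$; thus the probability of two or more jumps is $O(h^2)$, that of no jump (on which $\mathrm{(I)}=0$) is $1-b_{kk}h+O(h^2)$, and that of a single jump to $j\neq k$ is $-b_{kj}h+O(h^2)$. On the single--jump event the integrand is $\big(g_j-g_k\big)(s+h,\gamma(s+h,\omega))$; since $M$ is compact and $\gamma$ has a modulus of continuity uniform in $\omega$ by \eqref{h1 control}, the argument may be frozen at $(s,\gamma(s,\omega))$ up to an $o(1)$ error, which multiplied by the $O(h)$ jump probability is $o(h)$. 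Summing over $j\neq k$ and using the row--sum condition $\sum_j b_{kj}=0$ of (B1),
\begin{equation*}
\sum_{j\neq k}(-b_{kj})\big(g_j-g_k\big)\big(s,\gamma(s,\omega)\big)=-\big(B\gb\big)_k\big(s,\gamma(s,\omega)\big),
\end{equation*}
so that $\mathrm{(I)}=h\,\EE_i\big[-(B\gb)_{\omega(s)}(s,\gamma(s,\omega))\big]+o(h)$; as the integrands are bounded on $[0,T]\times M$, this limit too is attained in $L^1_{\mathrm{loc}}$. Adding the two contributions yields $R(s)$.

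The main obstacle is the coupling in $\mathrm{(I)}$ between the random jump and the path--dependence of $\gamma$: one must show that the space--time argument of $\gb$ can be frozen at $(s,\gamma(s,\omega))$ to leading order regardless of when, or whether, a jump occurs in $[s,s+h]$, so that the jump contribution decouples and sees only the generator $-B$ acting on the index. This rests on the compactness of $M$, the uniform modulus of absolute continuity \eqref{h1 control}, and the $O(h^2)$ control of multiple jumps afforded by the finite state space and the boundedness of $B$. The remaining points — the uniform total--variation bound furnishing the uniform integrability in $\mathrm{(II)}$, and the Fubini argument underpinning the a.e.\ statement — are routine.
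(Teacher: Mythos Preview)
The paper does not prove this theorem; it is stated with the remark ``see \cite[Theorem 4.7]{DSZ} for a proof'' and then used as a black box. So there is no in--paper argument to compare your sketch against.

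Your approach is the standard one for Dynkin--type formulas and is essentially correct. The split $F(s+h)-F(s)=\mathrm{(I)}+\mathrm{(II)}$, the treatment of $\mathrm{(II)}$ via uniform absolute continuity of $\gamma$ and a Fubini/Vitali argument, and the treatment of $\mathrm{(I)}$ via the generator expansion $\e^{-Bh}=\Id-Bh+O(h^2)$ together with the freezing of $\gamma(s+h,\omega)$ at $\gamma(s,\omega)$ are all sound. Two points are worth tightening when you write this up. First, the Markov property \eqref{Markov property} stated in the paper concerns only finite--dimensional distributions; to condition on the full history up to time $s$ you need the strong Markov property for the c\`adl\`ag chain, which is standard but should be invoked explicitly. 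Second, in $\mathrm{(I)}$ the random variable $\gamma(s+h,\omega)$ is \emph{not} measurable with respect to the past at time $s$ (it depends on the jump location in $[s,s+h]$), so the decoupling between the jump probabilities and the spatial argument really does rest on the uniform estimate $d\big(\gamma(s+h,\omega),\gamma(s,\omega)\big)\to 0$ as $h\to 0$, uniformly in $\omega$; you have identified this correctly, but it is the one step where care is needed. With these two points made precise your argument goes through.
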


%

\subsection{Representation formulae} In this section, we establish some representation formulae for solutions of the system \eqref{system discounted critical}. We begin with the critical system.

\begin{teorema}\label{teo minimal curve critical}
Let $\uu\in\vLip$ be a critical solution, namely a solution of \eqref{system discounted critical} with $\lambda=0$. Let $(y,\ell)\in M\times\ind$ and $t>0$ be fixed.
\begin{itemize}
\item[(i) ]The following holds:
\begin{equation*}   
u_\ell(y) = \inf_{\gamma(0,\omega)=y} \EE_\ell\left[
u_{\omega(t)}\big(\gamma(t,\omega)\big)+\int_0^t
\big(L_{\omega(s)}\big(\gamma(s,\omega),-\dot\gamma(s,\omega)\big)+c(\HH)\big)\,\dd s\right],
\end{equation*}
where the minimization is performed over all admissible curves $\gamma:\Omega\to\curves$ starting at $y$.\medskip
\item[(ii)] There exists an admissible curve $\eta:\Omega\to\curves$ starting at $y$ for which such a minimum is attained.  Moreover, for every $\omega\in \Omega$, the following holds:
\begin{equation}    \label{eq Lip curve}
-\dot\eta(s,\omega)\in \partial_p H_{\omega(s)}\Big(\eta(s,\omega),\partial^c u_{\omega(s)}\big(\eta(s,\omega)\big)\Big)\qquad
\hbox{for a.e. $s\in (0,t)$.}
\end{equation}
In particular, there exists a constant $k^*$, only depending on $H_1,\dots,H_m$ and $B$ such that \ 
$\|\dot\eta(\cdot,\omega)\|_\infty\leqslant k^*$ \  for every $\omega\in\Omega$. 
\end{itemize}
\end{teorema}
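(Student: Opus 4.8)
The plan is to prove the two inequalities underlying (i) separately: the bound $u_\ell(y)\le\inf(\cdots)$ holds for \emph{every} competitor and comes from the subsolution property, whereas the reverse bound will be obtained, together with (ii), by producing a \emph{single} admissible curve $\eta$ that turns all the inequalities into equalities.

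First I would fix an arbitrary admissible curve $\gamma$ starting at $y$ and apply Dynkin's formula (Theorem \ref{Dynkin's formula}) to the time--independent map $\gb$ given by $\gb(t,x)=\uu(x)$, obtaining
\[
\frac{\dd}{\dd s}\EE_\ell\big[u_{\omega(s)}(\gamma(s,\omega))\big]
= \EE_\ell\Big[-(B\uu)_{\omega(s)}(\gamma(s,\omega)) + \tfrac{\dd}{\dd t}u_{\omega(s)}(\gamma(t,\omega))\big|_{t=s}\Big]
\]
for a.e. $s\in(0,t)$. Since $\uu$ is Lipschitz and $\gamma(\cdot,\omega)$ is absolutely continuous, the Clarke chain rule identifies the inner time derivative with $\langle p_s,\dot\gamma(s,\omega)\rangle$ for some $p_s\in\partial^c u_{\omega(s)}(\gamma(s,\omega))$ (a.e.). Using that, by convexity of the $H_i$, the subsolution inequality extends to every element of the Clarke gradient, namely $H_{\omega(s)}(\gamma(s,\omega),p_s)+(B\uu)_{\omega(s)}(\gamma(s,\omega))\le c(\HH)$, and that Fenchel's inequality gives $\langle p_s,\dot\gamma\rangle\ge -H_{\omega(s)}(\gamma(s,\omega),p_s)-L_{\omega(s)}(\gamma(s,\omega),-\dot\gamma)$, the right--hand side above is bounded below by $-c(\HH)-\EE_\ell[L_{\omega(s)}(\gamma(s,\omega),-\dot\gamma(s,\omega))]$. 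Integrating on $[0,t]$ and recalling $\gamma(0,\cdot)=y$ and $\omega(0)=\ell$ under $\PP_\ell$ yields $u_\ell(y)\le\EE_\ell[\cdots]$, and since $\gamma$ is arbitrary, $u_\ell(y)\le\inf(\cdots)$.

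For the reverse inequality and for (ii), I would construct an admissible $\eta$ along which the two inequalities above hold as equalities. Because $\uu$ is a genuine solution, at a.e. point the sub-- and supersolution tests coincide and $H_i(x,Du_i(x))+(B\uu)_i(x)=c(\HH)$; moreover equality in Fenchel's inequality holds precisely when $-\dot\eta\in\partial_p H_{\omega(s)}(\eta,\cdot)$, which is the content of \eqref{eq Lip curve}. I would therefore build $\eta$ pathwise in $\omega$ by solving, on each maximal interval on which $\omega\equiv i$, the differential inclusion \eqref{eq Lip curve} for the Hamiltonian $H_i$, and by gluing these pieces continuously across the (almost surely finitely many) jump times of $\omega$ in $[0,t]$. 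Adapting the construction of \cite{DSZ}, this selection can be made measurable in $\omega$ and dependent on $\omega$ only through its restriction to $[0,s]$, so that $\eta$ is nonanticipating. Re--running the Dynkin computation along $\eta$ with equalities then gives $u_\ell(y)=\EE_\ell[\cdots]$, which at once closes the remaining inequality in (i) and shows that $\eta$ attains the infimum.

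The final velocity bound is then immediate: by Proposition \ref{prop a priori Lip} the solution $\uu$ is $\kappa$--Lipschitz with $\kappa=\kappa_{c(\HH)}$, so $\partial^c u_{\omega(s)}(\eta(s,\omega))$ lies in the ball of radius $\kappa$, and by the uniform local boundedness of the sets $\{\partial_p H_i(x,p)\}$ recalled in Section \ref{sys} the inclusion \eqref{eq Lip curve} forces $\|\dot\eta(\cdot,\omega)\|_\infty\le k^*$ for a constant $k^*$ depending only on $H_1,\dots,H_m$ and $B$; this uniform bound also supplies the absolute continuity required for admissibility. I expect the genuine difficulty to lie entirely in the construction of $\eta$: one must produce a nonanticipating, $\omega$--measurable, uniformly Lipschitz random curve realizing the infimum and simultaneously guarantee that the calibration equalities hold for a.e. $s$ despite the non--smoothness of $\uu$ (this being the reason the Clarke gradient $\partial^c u$, rather than $Du$, appears in \eqref{eq Lip curve}). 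This is exactly where the results and the selection technique of \cite{DSZ} must be invoked and adapted.
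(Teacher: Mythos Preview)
Your argument for the bound $u_\ell(y)\leqslant\inf(\cdots)$ via Dynkin's formula, the Clarke chain rule, and Fenchel's inequality is correct and is indeed the mechanism behind \cite[Proposition 5.6]{DSZ}; the paper simply quotes this rather than reproving it.

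There is, however, a genuine gap in your construction of the minimizer $\eta$. You propose to build $\eta$ by \emph{solving the differential inclusion} \eqref{eq Lip curve} on each interval between jumps and then to ``re--run the Dynkin computation along $\eta$ with equalities.'' But solutions of that differential inclusion are not automatically calibrating: the inclusion $-\dot\eta\in\partial_p H_i(\eta,\partial^c u_i(\eta))$ guarantees equality in Fenchel's inequality only for the particular $p_s\in\partial^c u_i(\eta(s))$ at which the inclusion is realized, and it does \emph{not} guarantee that the subsolution inequality $H_i(\eta,p_s)+(B\uu)_i(\eta)\leqslant c(\HH)$ is saturated at that same $p_s$. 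For nonsmooth $\uu$ the Clarke subdifferential may be genuinely set--valued, and a generic trajectory of the inclusion can pick the ``wrong'' $p_s$ at positively many times. In short, \eqref{eq Lip curve} is a \emph{consequence} of minimality, not a sufficient condition for it, so the logic cannot be reversed.

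The paper avoids this by a clean reduction: it observes that $\vv(t,x):=\uu(x)$ is a solution of the evolutionary system
\[
\partial_t\vv+B\vv+\HH(x,D\vv)-c(\HH)\1=0\qquad\hbox{in }(0,+\infty)\times M,
\]
so item (i) and the \emph{existence} of a minimizing admissible curve $\eta$ follow directly from \cite[Theorem~6.1]{DSZ}, where the minimizer is produced via the scalar Lax--Oleinik formula (a minimization over curves) and a measurable selection, \emph{not} by solving a differential inclusion. Only \emph{afterwards} does the paper derive \eqref{eq Lip curve}: by \cite[Lemmas~6.8 and~1.4]{DSZ}, minimality of $\eta$ yields, for a.e.\ $s$, a $p_s\in\partial^c u_{\omega(s)}(\eta(s,\omega))$ satisfying the calibration identity, and Fenchel duality then forces $-\dot\eta(s,\omega)\in\partial_p H_{\omega(s)}(\eta(s,\omega),p_s)$. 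Your derivation of the uniform Lipschitz bound $k^*$ from \eqref{eq Lip curve} and Proposition~\ref{prop a priori Lip} is correct and matches the paper exactly.
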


\begin{proof}
The assertion follows as a simple consequence of the results proved in \cite{DSZ}. It is easily seen that the function \  $\vv(t,x):=\uu(x)$ \  is a solution of the time--dependent system 
\[
\frac{\partial \vv}{\partial t}+B\vv + \HH( x,D  \vv) - c(\HH)\1=0\qquad\hbox{in $\cyl$}
\]
with initial datum $\vv(0,\cdot)=\uu$. Item (i) and the first assertion in (ii) readily follow from \cite[Theorem 6.1]{DSZ}.  Let us prove \eqref{eq Lip curve}. Fix $\omega\in\Omega$. According to Lemma 6.8 and Lemma 1.4 in \cite{DSZ}, for a.e. $s\in (0,t)$ there exists $p_s\in \partial^c u_{\omega(s)}\big(\eta(s,\omega)\big)$ such that 
\[
\langle p_s, -\dot\eta(s,\omega)\rangle= L_{\omega(s)}\big(\eta(s,\omega),-\dot\eta(s,\omega)\big)+c(\HH)+\Big(B\uu\big(\eta(s,\omega)\big)\Big)_{\omega(s)},
\]
hence, by Fenchel's duality we get \ $-\dot\eta(s,\omega)=\partial_p H_{\omega(s)}\left(\eta(s,\omega),p_s\right)$. The remainder of the statement follows from Proposition \ref{prop a priori Lip} and the fact that $\partial_p H_i(x,p)$ is bounded on compact subsets of $T^*M$ due to (H1)--(H2$'$). 
\end{proof}

Let us now consider the discounted system. 

\begin{teorema}\label{teo minimal discounted curve}
Let $\uu^\lambda\in\vLip$ be the solution of \eqref{system discounted critical} with $\lambda>0$. Let $(y,\ell)\in M\times\ind$ be fixed.
\begin{itemize}
\item[(i) ]The following holds:
\begin{equation}  \label{discounted representation}
u^\lambda_\ell(y)
=
\inf_{\gamma(0,\omega)=y}
\EE_\ell\left[ \int_0^{+\infty}\e^{-\lambda s} \Big(L_{\omega(s)} \big(\gamma(s,\omega),-\dot\gamma(s,\omega)\big)+c(\HH)\Big)\,\dd s\right],
\end{equation}
where the minimization is performed over all admissible curves $\gamma:\Omega\to\curves$ starting at $y$.\medskip
\item[(ii)] There exists an admissible curve $\eta^\lambda:\Omega\to\curves$ starting at $y$ for which such a minimum is attained.  Moreover, for every $\omega\in \Omega$, the following holds:
\begin{equation}  \label{eq Lip discounted curve}
-\dot\eta^\lambda(s,\omega)\in \partial_p H_{\omega(s)}\Big(\eta^\lambda(s,\omega),\partial^c u_{\omega(s)}\big(\eta^\lambda(s,\omega)\big)\Big)\qquad
\hbox{for a.e. $s\in (0,+\infty)$.}
\end{equation}
In particular, there exists a constant $k^*$, only depending on $H_1,\dots,H_m$ and $B$ such that \ 
$\|\dot\eta^\lambda(\cdot,\omega)\|_\infty\leqslant k^*$ \  for every $\omega\in\Omega$ and $\lambda>0$.\\ 
\end{itemize}
\end{teorema}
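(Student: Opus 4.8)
The plan is to prove the two opposite inequalities that together give the representation formula \eqref{discounted representation}, following the template of the critical case (Theorem \ref{teo minimal curve critical}) but now on an infinite horizon. First I would show that $u^\lambda_\ell(y)$ is not larger than the infimum by a verification argument. Fix an admissible curve $\gamma$ starting at $y$ and apply Dynkin's formula (Theorem \ref{Dynkin's formula}) to the locally Lipschitz function $\gb(s,x):=\e^{-\lambda s}\uu^\lambda(x)$. Using that $\uu^\lambda$ is a subsolution of \eqref{system discounted critical}, so that $H_{\omega(s)}\big(\cdot,Du^\lambda_{\omega(s)}\big)+(B\uu^\lambda)_{\omega(s)}+\lambda\,u^\lambda_{\omega(s)}\leqslant c(\HH)$ at a.e. point, together with Fenchel's inequality, one bounds the $s$--derivative of $\EE_\ell\big[\e^{-\lambda s}u^\lambda_{\omega(s)}(\gamma(s,\omega))\big]$ from below by $-\EE_\ell\big[\e^{-\lambda s}\big(L_{\omega(s)}(\cdots)+c(\HH)\big)\big]$. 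Integrating on $[0,T]$ gives
\[
u^\lambda_\ell(y)\leqslant \EE_\ell\Big[\e^{-\lambda T}u^\lambda_{\omega(T)}\big(\gamma(T,\omega)\big)\Big]+\EE_\ell\left[\int_0^T\e^{-\lambda s}\Big(L_{\omega(s)}\big(\gamma(s,\omega),-\dot\gamma(s,\omega)\big)+c(\HH)\Big)\,\dd s\right],
\]
and letting $T\to+\infty$, so that the boundary term vanishes because $\uu^\lambda$ is bounded, and taking the infimum over $\gamma$ yields the wanted inequality. As in the critical case, the chain rule along the merely Lipschitz $\uu^\lambda$ is made rigorous through the Clarke--gradient tools of \cite{DSZ}.

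For the reverse inequality and assertion (ii), I would adapt the construction of a calibrated curve from \cite{DSZ} to the discounted functional. For each horizon $T>0$, the minimization of $\EE_\ell\big[\int_0^T\e^{-\lambda s}(L+c(\HH))\,\dd s+\e^{-\lambda T}u^\lambda_{\omega(T)}(\gamma(T,\omega))\big]$ is a problem of the type treated in \cite{DSZ} and admits a minimizing admissible curve $\eta^{\lambda,T}$. Since $\uu^\lambda$ is the stationary solution of \eqref{system discounted critical}, the principle of optimality makes these minimizers consistent under restriction, so that, using the uniform speed bound established below together with Ascoli--Arzel\`a and a diagonal argument, I can pass to the limit $T\to+\infty$ and produce a single admissible curve $\eta^\lambda$ on $\R_+$ that is optimal on every finite horizon. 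Exactly as in the proof of Theorem \ref{teo minimal curve critical}, \cite[Lemma 6.8, Lemma 1.4]{DSZ} yield for a.e. $s$ a covector $p_s\in\partial^c u_{\omega(s)}\big(\eta^\lambda(s,\omega)\big)$ realizing equality in Fenchel's inequality, and \eqref{eq Lip discounted curve} then follows by Fenchel duality. Along this calibrated curve Dynkin's formula holds with equality, which identifies $u^\lambda_\ell(y)$ with the cost of $\eta^\lambda$ and yields the opposite inequality.

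The uniform Lipschitz estimate is then immediate from \eqref{eq Lip discounted curve}. By Proposition \ref{prop u lambda} the solutions $\uu^\lambda$ are equi--Lipschitz with a constant $\kappa$ independent of $\lambda$, hence $\partial^c u_{\omega(s)}\big(\eta^\lambda(s,\omega)\big)$ is contained in the fixed ball $\{|p|\leqslant\kappa\}$; since, by (H1) and (H2$'$), each $\partial_p H_i$ is bounded on $M\times\{|p|\leqslant\kappa\}$, the inclusion \eqref{eq Lip discounted curve} forces $\|\dot\eta^\lambda(\cdot,\omega)\|_\infty\leqslant k^*$ with $k^*$ depending only on $H_1,\dots,H_m$ and $B$, uniformly in $\lambda>0$ and $\omega\in\Omega$.

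The step I expect to be the main obstacle is the passage to the infinite horizon in the construction of the minimizer: one must manufacture a single nonanticipating, uniformly (in $\omega$) absolutely continuous curve defined on all of $\R_+$ out of the finite--horizon minimizers, while preserving both optimality and the calibration identity \eqref{eq Lip discounted curve} on every interval $(0,T)$. The discount factor $\e^{-\lambda s}$, which guarantees integrability of the cost and the vanishing of the terminal contribution, and the $\lambda$--independent bound $k^*$, which provides the equicontinuity needed for the compactness argument, are exactly the ingredients that make this limiting procedure go through.
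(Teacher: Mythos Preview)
Your verification argument for the inequality $u^\lambda_\ell(y)\leqslant\inf(\cdots)$ is correct and is exactly what the paper does: Dynkin's formula applied to $\gb(t,x)=\e^{-\lambda t}\uu^\lambda(x)$, Fenchel's inequality, and the subsolution property yield \eqref{inequality discounted curve} just as you describe.

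The construction of the minimizing curve, however, has a real gap. Your plan is to take finite--horizon random minimizers $\eta^{\lambda,T}:\Omega\to\curves$ and pass to the limit $T\to+\infty$ via Ascoli--Arzel\`a and a diagonal argument. The difficulty is that Ascoli--Arzel\`a gives you compactness of the \emph{deterministic} curves $t\mapsto\eta^{\lambda,T}(t,\omega)$ for each fixed $\omega$, but there is no reason a single subsequence $T_n\to+\infty$ should work simultaneously for \emph{all} $\omega$ in the uncountable set $\Omega$; a diagonal extraction over $\omega$ is not available. Even if you could arrange $\omega$--wise convergence, you would still have to check that the limit is measurable in $\omega$ and nonanticipating in the sense of \eqref{h2 control}, and neither property is automatic under pointwise limits. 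The consistency under restriction that you invoke (a minimizer on $[0,T]$ restricts to a minimizer on $[0,T']$) goes the wrong way: it does not let you extend a chosen minimizer on $[0,T']$ to one on $[0,T]$, so it does not produce a coherent family without the compactness step.

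The paper circumvents this entirely by a different route. It sets $\vv(t,x):=\e^{\lambda t}\uu^\lambda(x)$, so that each component $v_i$ solves a \emph{scalar} time--dependent equation with Hamiltonian $G_i(t,x,p)=\e^{\lambda t}\big(H_i(x,\e^{-\lambda t}p)-c(\HH)\big)+\sum_k b_{ik}v_k(t,x)$, the coupling being frozen as a known source term. For this scalar problem one has a \emph{deterministic} Lax--Oleinik formula, deterministic minimizing curves $\gamma_{\tau,y}$, and the limit $\tau\to+\infty$ is taken once and for all in $\curves$ for each fixed pair $(y,i)$, yielding curves $\gamma_{i,y}:(-\infty,0]\to M$ satisfying \eqref{LO}. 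One then invokes a measurable selection theorem to choose $\Xi(y,i)\in\Gamma(y,i)$ measurably, and finally \emph{builds} the admissible random curve $\eta^\lambda$ by hand, gluing the deterministic pieces $\Xi\big(y_k(\omega),\omega(\tau_k(\omega))\big)$ between the jump times $\tau_k(\omega)$ of $\omega$. Measurability and the nonanticipating property then hold by construction, and the differential inclusion \eqref{eq Lip discounted curve} is inherited from the deterministic inclusion \eqref{eq deterministic discounted curve} on each interjump interval. Your derivation of the uniform speed bound $k^*$ from \eqref{eq Lip discounted curve}, Proposition~\ref{prop u lambda} and the local boundedness of $\partial_p H_i$ is correct and is also how the paper concludes.
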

\begin{proof}
Let $\gamma:\Omega\to\curves$ be an admissible curve starting at $y$. By applying Dynkin's formula to the function $\gb(t,x):=\e^{-\lambda t}\uu^\lambda(x)$ and by integrating \eqref{eq Dynkin's formula} on $(0,+\infty)$ we get 
\[
u^\lambda_\ell(y)=\EE_{\ell}
\left[
 \int_0^{+\infty} \e^{-\lambda t}
\left(  
(B+\lambda\Id) \uu^\lambda 
\big)_{\omega(s)}
\big(\gamma(s,\omega)\big)    
\right) 
+
\langle Du^\lambda_{\omega(s)},-\dot\gamma(s,\omega)\rangle
  \right].
\]
We now make use of Fenchel's inequality together with the fact that $\uu^\lambda$ is a solution of the discounted system 
\eqref{system discounted critical}. 
Arguing as in the proof of Proposition 5.6 in \cite{DSZ} we end up with 
\begin{eqnarray}\label{inequality discounted curve}
u^\lambda_\ell(y)\leqslant \EE_\ell\left[ \int_0^{+\infty}\e^{-\lambda s} \Big(L_{\omega(s)} \big(\gamma(s,\omega),-\dot\gamma(s,\omega)\big)+c(\HH)\Big)\,\dd s\right].
\end{eqnarray}
Next, we prove that there exists an admissible curve $\eta^\lambda:\Omega\to\curves$ starting at $y$ for which \eqref{inequality discounted curve} holds with an equality. This will be obtained via a slight modification of the strategy employed in \cite{DSZ}. 
Let $\vv(t,x) = \e^{\lambda t}\uu^\lambda(x)$. It is readily verified that $\vv$ verifies the following system:
$$\frac{\partial \vv}{\partial t}+B\vv +\e^{\lambda t}\left( \HH( x,\e^{-\lambda t}D  \vv) - c(\HH)\1\right)=0\qquad\hbox{in $\cyl$}.
$$
In particular $v_i $ is, for each fixed $i\in\ind$, a solution to the equation
$$\frac{\partial v_i}{\partial t} + G_i(t,x,D  v_i) = 0\qquad\hbox{in $\cyl$,}
$$
where $G_i(t,x,p) =\e^{\lambda t}  \big(H_i(x,\e^{-\lambda t} p)-c(\HH)\big) +\sum_{k=1}^m b_{ik}v_k(t,x)$.
As $v_i$ is locally Lipschitz, it is standard, see for instance Appendix A in \cite{DSZ}, that it verifies the following Lax--Oleinik formula for every $(t,y)\in (0,+\infty)\times M$:
\begin{equation}\label{eq scalar value function}
v_i(t,y) = \inf_{\gamma}v_i\big(0,\gamma(-t)\big)+\int_{-t}^0 L_{G_i}\big(t+s,\gamma(s),\dot\gamma(s)\big)\, \dd s,
\end{equation}
where $L_{G_i}$ is the Lagrangian associated to $G_i$ by duality and the infimum is taken amongst all absolutely continuous curves $\gamma : [-t,0]\to M$ such that $\gamma(0) = y$. By standard results in the Calculus of Variations, we know that this infimum is in fact a minimum. For any fixed $\tau>0$, let us denote by $\gamma_{\tau,y}: [-\tau,0]\to M$ be an absolutely continuos curve with $\gamma_{\tau,y}(0) = y$ and realizing 
 the minimum in \eqref{eq scalar value function} with $t:=\tau$. By the Dynamic Programming Principle, such a curve $\gamma_{\tau,y}$ is also a minimizer 
 of \eqref{eq scalar value function} for every $t\leqslant \tau$. 
Arguing as in the proof of Theorem \ref{teo minimal curve critical}, we get
\begin{eqnarray}\label{eq deterministic discounted curve}
\dot\gamma_{\tau,y}(s)\in\partial_p G_i\Big  (t+s,\gamma_{\tau,y}(s),\partial^c v_i\big(t+s,\gamma_{\tau,y}(s)\big)\Big)=\partial_p H_i \big(\gamma_{\tau,y}(s),\partial^c u_i(s,x)\big)
\end{eqnarray} 
for a.e. $s\in (-t,0)$. Due to the equi--Lispchitz character of the functions $\{\uu^\lambda\mid\lambda>0\}$ established in Proposition \ref{prop u lambda}, we infer that there exists a constant $\kappa^*$, independent of $(t,y)\in (0,+\infty)\times M$ and $\lambda>0$, so that \ $\|\dot\gamma_{\tau,y}\|_\infty\leqslant \kappa^*$. 
Note that $L_{G_i}(t,x,v)= \e^{\lambda t}\big(L_i(x,v)+c(\HH)-\sum_{k=1}^m b_{ik}u^\lambda_k(x)\big)$. 
It follows that
\begin{eqnarray*}
u^\lambda_i(y) = \e^{-\lambda t}u^\lambda_i\big(\gamma_{\tau,y}(-t)\big) + \int_{-t}^0 \e^{\lambda s}\Big(L_i\big(\gamma_{\tau,y}(s),\dot\gamma_{\tau,y}(s)\big)+c(\HH) - \sum_{k=1}^m b_{ik}u^\lambda_k\big(\gamma_{\tau,y}(s)\big)\Big)\,\dd s
\end{eqnarray*}
for every $t\leqslant \tau$. 
Letting $\tau\to +\infty$ and extracting a subsequence, we obtain a curve $\gamma_{i,y}  : (-\infty,0]\to M$ with $\gamma_{i,y}(0)=y$ and satisfying the previous 
equality for every $t>0$.  
%
By sending $t\to +\infty$, we end up with
\begin{align}\label{LO}
u^\lambda_i(y)= \int_{-\infty}^0 \e^{\lambda s}\Big(L_i\big(\gamma_{i,y}(s),\dot\gamma_{i,y}(s)\big) +c(\HH)- \sum_{k=1}^m b_{ik}u^\lambda_k\big(\gamma_{i,y}(s)\big)\Big)\,\dd s.
\end{align}
Now the proof ends exactly as in \cite{DSZ}. For every  $(y,i)\in M\times\ind $, we denote by $\Gamma(y,i)$ the set of absolutely continuous curves $\gamma:(-\infty,0]\to M$ with $\gamma(0)=y$ satisfying \eqref{LO}. The set $\Gamma(y,i)$ is nonempty, in view of the preceding discussion. Moreover, any curve in $\Gamma(y,i)$ satisfies \eqref{eq deterministic discounted curve} for a.e. $s\in (0,+\infty)$, in particular it is $\kappa^*$--Lipschitz continuous. We derive that \ $(y,i)\mapsto\Gamma(y,i)$ \ 
is compact--valued and upper semicontinuous as a set--valued map from $M\times\ind$ to $\curves$, in particular it is measurable. 
By \cite[Theorem III.8]{CV}, there exists a measurable function\ $\Xi:M\times\ind\to\curves$\ 
such that
\[
\Xi(y,i) \in \Gamma(y,i) \qquad\hbox{for every $(y,i)\in M\times\ind$.}
\]
For any fixed $\omega\in\Omega$, let  $\big(\tau_k(\omega)\big)_{k\geqslant 0}$ be the sequence of jump times of $\omega$, where $\tau_0(\omega):=0$ and 
$\tau_k(\omega)$ is the $k$--th jump time. We define inductively a sequence $\big(y_k(\omega)\big)_{k\geqslant 0}$ of points in $M$ by setting $y_0:=y$ and
\[
y_k(\omega):=\Xi\big(y_{k-1}(\omega),\omega(\tau_{k-1}(\omega)\big)\big(\tau_k(\omega)\big)\quad\hbox{for every $k\geqslant 1$}.
\]
The sought curve is given by 
\begin{eqnarray*}
\eta^\lambda(t,\omega):=\Xi\Big(y_{k}(\omega),\omega\big(\tau_{k}(\omega)\big)\Big)(-t)\qquad\hbox{if \ \ $t\in \big[\tau_{k}(\omega),\tau_{k+1}(\omega)\big)$},
\end{eqnarray*}
for every $k\geqslant 0$ and $\omega\in\Omega$. Arguing as in \cite[Section 6]{DSZ}, one can check that $\eta^\lambda$ is an admissible curve starting at $y$ for which \eqref{inequality discounted curve} holds with an equality. The fact that $\eta^\lambda$ satisfies \eqref{eq Lip discounted curve} is clear by construction in view of \eqref{eq deterministic discounted curve}.
\end{proof}
\medskip

\section{Mather measures for the critical system}\label{sez Mather theory}

In this section we generalize the notion of Mather minimizing measure to the case of the critical system, i.e. 
\begin{equation}\label{critical system}
B\uu +\HH(x, D \uu) = c(\HH)\1\qquad\hbox{in $M$}.
\end{equation}
It is not so surprising that such measures will be concentrated on the support of minimizing controls associated to solutions of \eqref{critical system}. 

We start by adapting the notion of closed measure to this setting.

\begin{definition}\label{def closed measure}
A probability measure $\mu$ on $ TM\times\ind$ will be termed closed if 
\begin{itemize}
\item[(i)] \quad $\displaystyle\int_{ TM\times\ind} |v|\ \dd \mu(x,v,i) <+\infty$;\medskip
\item[(ii)] \quad $\displaystyle\int_{ TM\times\ind } \big(B\pphi(x)\big)_i + \langle D \phi_i(x), v\rangle\  \dd \mu(x,v,i) = 0$\qquad for every $\pphi\in\vCuno$.\medskip
\end{itemize}
We will denote by $\Mis$ the set of closed measures on $TM\times\ind$.\\
\end{definition}

\begin{teorema}\label{ineq1}
The following holds:
\begin{equation}\label{eq Mather measures}
-c(\HH) = \min_{\mu\in\Mis} \int_{ TM\times\ind} L_i(x,v)\ \dd\mu(x,v,i).
\end{equation}
In particular, $\Mis$ is non empty. 
\end{teorema}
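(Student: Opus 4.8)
The plan is to prove the two inequalities separately, the lower bound being a soft duality argument and the reverse inequality an explicit construction of a minimizer. For the lower bound, I would fix a Lipschitz critical subsolution $\uu$, which exists thanks to the characterization \eqref{eq characterization c(H)}. For any $\mu\in\Mis$, combining the subsolution inequality $H_i(x,Du_i)+\big(B\uu(x)\big)_i\leqslant c(\HH)$ with Fenchel's inequality yields, for $\mu$-almost every $(x,v,i)$,
\[
L_i(x,v)+c(\HH)\geqslant \langle Du_i(x),v\rangle + \big(B\uu(x)\big)_i .
\]
Integrating against $\mu$ and using property (ii) of Definition \ref{def closed measure} to annihilate the right-hand side gives $\int L_i\,\dd\mu\geqslant -c(\HH)$, hence $\inf_{\Mis}\int L_i\,\dd\mu\geqslant -c(\HH)$. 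Since property (ii) is stated for $C^1$ test functions while $\uu$ is merely Lipschitz, I would first regularize by setting $\uu^\eps=\uu*\rho_\eps$: by the convexity (H1) and Jensen's inequality, and because the constant coupling $B$ commutes with mollification, $\uu^\eps$ is a subsolution up to an error $o_\eps(1)$ vanishing uniformly as $\eps\to0$. Applying the estimate to the $C^1$ function $\uu^\eps$ and letting $\eps\to0$ recovers the inequality, the integrability of $|v|$ against $\mu$ justifying the limit in $\int\langle Du_i^\eps,v\rangle\,\dd\mu$.

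For the reverse inequality and for nonemptiness, I would build a minimizer out of the optimal curve. Fix $(y,\ell)$, let $\eta$ be the admissible minimizing curve provided by Theorem \ref{teo minimal curve critical}(ii), and define for each $t>0$ a probability measure $\mu_t$ on $TM\times\ind$ by
\[
\int_{TM\times\ind} f\,\dd\mu_t := \frac1t\,\EE_\ell\Big[\int_0^t f\big(\eta(s,\omega),-\dot\eta(s,\omega),\omega(s)\big)\,\dd s\Big],
\qquad f\in\D{C}(TM\times\ind).
\]
Because $\|\dot\eta(\cdot,\omega)\|_\infty\leqslant k^*$ uniformly in $\omega$, every $\mu_t$ is supported in the fixed compact set $\{|v|\leqslant k^*\}\times\ind$; the family is thus tight and admits a weak-$*$ limit $\mu$ along a sequence $t_n\to+\infty$, itself a probability measure carried by the same compact set, so that condition (i) holds automatically.

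The heart of the matter is then to verify that $\mu$ is closed and has the right value. For closedness I would apply Dynkin's formula (Theorem \ref{Dynkin's formula}) to the time-independent function $\gb(t,x)=\pphi(x)$, with $\pphi\in\vCuno$, along $\eta$, integrate on $(0,t)$ and divide by $t$, obtaining
\[
\frac{\EE_\ell\big[\phi_{\omega(t)}(\eta(t,\omega))\big]-\phi_\ell(y)}{t}
=-\int_{TM\times\ind}\big((B\pphi(x))_i+\langle D\phi_i(x),v\rangle\big)\,\dd\mu_t .
\]
As $\pphi$ is bounded on $M$, the left-hand side tends to $0$, and the weak-$*$ passage gives exactly property (ii). For the value, the representation formula of Theorem \ref{teo minimal curve critical}(i), evaluated at the optimal $\eta$, rearranges to
\[
\int_{TM\times\ind}L_i\,\dd\mu_{t}=\frac{u_\ell(y)-\EE_\ell\big[u_{\omega(t)}(\eta(t,\omega))\big]}{t}-c(\HH),
\]
so letting $t=t_n\to+\infty$ (the first term vanishing since $\uu$ is bounded) yields $\int L_i\,\dd\mu=-c(\HH)$. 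Together with the lower bound this shows $\mu$ realizes the infimum, that the infimum is a minimum, and that $\Mis\neq\emptyset$.

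The main obstacle I expect is twofold. First, the regularization in the lower bound: one must check that mollification preserves the subsolution property up to a vanishing error in spite of the coupling term and the mere Lipschitz regularity of $\uu$. Second, and more essentially, one must ensure the time-averaged measures do not lose mass in the limit and that weak-$*$ convergence may be tested against the unbounded Lagrangian $L_i$; this is precisely where the uniform Lipschitz bound $k^*$ on the minimizing curves, guaranteed by Theorem \ref{teo minimal curve critical}(ii), does the decisive work by confining every $\mu_t$ to a common compact set on which both $L_i$ and the integrand of the closedness identity are bounded and continuous.
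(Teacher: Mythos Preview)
Your proposal is correct and follows essentially the same route as the paper: the lower bound via mollification of a critical (sub)solution combined with Fenchel's inequality and the closedness condition, and the upper bound via time-averaged occupation measures along optimal random curves, with closedness checked by Dynkin's formula and the minimizing value read off from the representation formula. One small point: Theorem~\ref{teo minimal curve critical}(ii) furnishes a minimizer $\eta$ for each \emph{fixed} horizon $t$, so the paper takes a sequence $\eta_k$ (one for each $k\in\N$) rather than a single curve; your argument goes through verbatim once you let $\eta=\eta_t$ depend on $t$.
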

\begin{proof}
We first observe that, for every $\eps>0$, there exists a function $\ww^\eps\in\vCuno$ such that 
\begin{equation}\label{eq C1 inequality}
B\ww^\eps +\HH(x, D \ww^\eps) \leqslant \big(c(\HH)+\eps\big)\1\qquad\hbox{for every $x\in M$}.
\end{equation}
To see this, take a solution $\uu$ of \eqref{critical system} and regularize it via convolution with a standard mollifier. The above inequality follows, for a 
proper choice of the mollifier, via a well known argument based on Jensen's inequality, the convexity of the Hamiltonians and the fact that $\uu$ is Lipschitz. 

By integrating \eqref{eq C1 inequality} with respect to a measure $\mu\in \Mis$ and by using Fenchel's inequality we get:
$$
\int_{ TM\times\ind}(B\ww^\eps)_i +\langle D  w^\eps_i(x), v\rangle - L_i(x,v)\   \dd \mu(x,v,i)\leqslant c(\HH)+\eps .
$$
Since $\mu$ is closed, the left hand side is equal to $-\int_{TM\times\ind} L_i\ \dd \mu$. By letting $\eps \to 0^+$ we obtain
\[
 \int_{ TM\times\ind} L_i(x,v)\ \dd\mu(x,v,i)\geqslant -c(\HH).
\]
Let us now proceed to prove the existence of a minimizing closed measure. To this aim, take a critical solution $\uu$ and fix $(y,\ell)\in M\times\ind$. For every $k\in\N$, let 
$\eta_k:\Omega\to\curves$ be an admissible curve starting at $y$ and such that 
\begin{equation}\label{eq building Mather measure}
u_\ell(y)=\EE_\ell\left[ u_{\om(k)}\big(\eta_k(k,\omega)\big)+\int_0^{k} \Big(L_{\omega(s)}\big(\eta_k(s),-\dot\eta_k(s)\big)+c(\HH)\Big)\,\dd s\right].  
\end{equation}
We define a probability measure $\mu_k$ on $TM\times\ind$ by setting
\[
\int_{TM\times\ind} \ff\,\dd\mu_k
:=\frac1k\, \EE_\ell\left[ \int_0^k f_{\omega(s)}\big(\eta_k(s,\omega),-\dot\eta_k(s,\omega)\big)\,\dd s \right],
\qquad\hbox{$\ff\in\vCc$.}
\]
In view of Theorem \ref{teo minimal curve critical}, these measures have support contained in a common compact subset of $TM\times\ind$, so, up to subsequences, $(\mu_k)_k$ weakly 
converges to a probability measure $\mu$ on $TM\times\ind$. Let us show that $\mu$ is closed. 
It clearly satisfies item (i) of Definition \ref{def closed measure} since its support is compact. Let $\pphi\in\vCuno$. 
By applying Dynkin's formula to the function $\gb(t,x):=\pphi(x)$, see Theorem \ref{Dynkin's formula}, and by integrating \eqref{eq Dynkin's formula} in $(0,k)$
 we get 
\begin{align*} 
&\EE_\ell
\left[ \int_0^k  \big(B\pphi\big)_{\omega(s)}\big(s,\eta_k(s,\omega)\big)+ \langle D\phi_{\omega(s)}\big(\eta_k(s,\omega)\big),-\dot\eta_k(s,\omega)\rangle\,\dd s \right]\\
&\qquad=
 \phi_\ell(y)-\, \EE_\ell \big[\phi_{\omega(k)}\big(\eta_k(k,\omega)\big) \big] ,
\end{align*}
otherwise stated
\[
\int_{ TM\times\ind } \big(B\pphi(x)\big)_i + \langle D \phi_i(x), v\rangle\  \dd \mu_k(x,v,i) = 
\frac{ \phi_\ell(y)-\, \EE_\ell \big[\phi_{\omega(k)}\big(\eta(k,\omega)\big) \big] }{k}.
\]
By sending $k\to +\infty$ we infer that $\mu$ satisfies item (ii) in Definition \ref{def closed measure} as well. To prove that $\mu$ is minimizing, we remark that, in view of \eqref{eq building Mather measure} and the fact that the measures $(\mu_k)_k$ have equi--compact support, we have 
\begin{align*}
&\int_{ TM\times\ind} \big(L_i(x,v)+c(\HH)\big)\ \dd\mu
 =
 \lim_{k\to +\infty}  \int_{ TM\times\ind} \big(L_i(x,v)+c(\HH)\big)\ \dd\mu_k\\
&\qquad=  \lim_{k\to +\infty} \frac1k\Big(u_\ell(y)-\EE_\ell\left[ u_{\om(k)}\big(\eta_k(k,\omega)\big)\right]\Big)=0.
\end{align*}
\end{proof}

We will call {\em Mather measure} a closed probability measure on $TM\times\ind$ which minimizes \eqref{eq Mather measures}. The set of Mather measures will be denoted by $\Mis_0$ in the sequel.\medskip

\section{Convergence of the discounted solutions}\label{sez main teo}

This section is devoted to the proof of Theorem \ref{intro main teo}, namely that the solutions $(\uu^\lambda)_{\lambda>0}$ of the discounted system \eqref{system discounted critical} converge to a particular solution $\uu^0$ of the critical system \eqref{critical system} as $\lambda\to 0^+$.  

The first step consists in identifying a good candidate $\uu^0$ for the limit of the solutions $\uu^\lambda$. 
To this aim, we consider the family ${\mathcal F} $ of subsolutions  $\ww\in\vC$ of the critical system \eqref{critical system} satisfying the following condition
\begin{equation}\label{CONDITiON U0}
\int_{TM\times\ind} w_i(y)\,\dd\mu(y,v,i)\leqslant 0 \qquad\text{for every $\mu\in\Mis_0$}, 
\end{equation}
where $\Mis_0$ denotes the set of Mather measures, see Section \ref{sez Mather theory}. 

Note that, given any critical subsolution $\ww$, the function $\ww-\1\lVert \ww\rVert_\infty$  
is in ${\mathcal F} $. Therefore ${\mathcal F} $ is not empty.
\begin{lemma}\label{lemma F meno}
The family ${\mathcal F} $ is uniformly bounded from above, i.e.
$$\sup\{w_i(x)\mid \ww\in {\mathcal F} \}<+\infty\qquad\hbox{for every $(x,i)\in M\times\ind$}.$$
\end{lemma}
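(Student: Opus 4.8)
The plan is to combine the uniform a priori estimates available for all critical subsolutions with the single normalization encoded in \eqref{CONDITiON U0}. First I would recall that, by Proposition \ref{prop a priori Lip} applied with $c=c(\HH)$, every $\ww\in{\mathcal F}$ (being a subsolution of the critical system) is $\kappa$--Lipschitz with $\kappa:=\kappa_{c(\HH)}$ and satisfies $\|w_i-w_j\|_\infty\leqslant C$ with $C:=C_{c(\HH)}$, where $\kappa$ and $C$ do not depend on $\ww$. Consequently the total oscillation of each $\ww$ is controlled uniformly: for any indices $i,j\in\ind$ and points $x,y\in M$,
\[
w_j(x)-w_i(y)\leqslant |w_j(x)-w_j(y)|+\|w_j-w_i\|_\infty\leqslant \kappa\,\diam{M}+C=:R,
\]
with $R$ independent of $\ww\in{\mathcal F}$.

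Next I would fix, once and for all, a Mather measure $\mu\in\Mis_0$, which exists by Theorem \ref{ineq1}. Since $\mu$ is a probability measure and, by the defining constraint \eqref{CONDITiON U0}, $\int_{TM\times\ind} w_i(y)\,\dd\mu(y,v,i)\leqslant 0$, the continuous function $(y,v,i)\mapsto w_i(y)$ cannot be bounded below by a positive constant on the whole support of $\mu$; otherwise its integral would be strictly positive. Hence for each $\ww\in{\mathcal F}$ there exists a point $(\bar y,\bar i)$ in the (fixed, compact) support of $\mu$ at which $w_{\bar i}(\bar y)\leqslant 0$.

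Finally I would propagate this single nonpositive value to a global upper bound. For arbitrary $(x,j)\in M\times\ind$ one writes $w_j(x)=\big(w_j(x)-w_{\bar i}(\bar y)\big)+w_{\bar i}(\bar y)\leqslant w_j(x)-w_{\bar i}(\bar y)\leqslant R$, using the oscillation estimate from the first step together with $w_{\bar i}(\bar y)\leqslant 0$. Since $R$ depends only on the Hamiltonians and on $B$ (through $\kappa_{c(\HH)}$ and $C_{c(\HH)}$) and not on $\ww$, this yields $\sup\{w_j(x)\mid\ww\in{\mathcal F}\}\leqslant R<+\infty$ for every $(x,j)\in M\times\ind$, as claimed. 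The only point requiring a little care is the extraction of the point $(\bar y,\bar i)$: it relies on fixing a single Mather measure so that its support is a compact set independent of $\ww$, which keeps $d(x,\bar y)\leqslant\diam{M}$ uniformly bounded. Beyond this bookkeeping I do not expect any genuine obstacle.
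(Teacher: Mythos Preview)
Your proof is correct and follows essentially the same route as the paper: use Proposition~\ref{prop a priori Lip} to get uniform Lipschitz and cross--component bounds, fix a Mather measure so that \eqref{CONDITiON U0} forces a nonpositive value of $\ww$ somewhere, and propagate via the oscillation estimate to the global bound $\kappa\,\diam{M}+C$. Your closing worry about fixing the support of $\mu$ is unnecessary, since $M$ itself is compact and hence $d(x,\bar y)\leqslant\diam{M}$ holds regardless of where $\bar y$ lies; the paper simply notes that $\min_i\min_M w_i\leqslant\int w_i\,\dd\mu\leqslant 0$ without reference to the support.
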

\begin{proof} 
Let us denote by $\kappa$ and $C$ the constants provided by Proposition \ref{prop a priori Lip} for $c:=c(\HH)$. Pick $\mu\in\Mis_0$.  
For $\ww\in{\mathcal F} $, we have 
\[
\min_i \min_M w_i\leqslant \int_{TM\times\ind} w_i(y)\,\dd\mu(y,v,i)\leqslant 0.
\]
Let $j\in\ind$ such that $\min_M w_j=\min_i\min_M w_i$. Since $w_j$ is $\kappa$-Lipschitz, we infer 
\[
\max_M w_j\leqslant \max_M w_j-\min_M w_j \leqslant \kappa\operatorname{diam}(M)<+\infty
\]
On the other hand, for $i\not=j$ we have  \ $w_i\leqslant w_j+\|w_i-w_j\|_\infty\leqslant \kappa\D{diam}(M)+C$ \ in $M$. 
 \end{proof}
Therefore we can define  $\uu^0:M\to \R^m$ by
\begin{equation}\label{first def u_0}
u^0_i(x):=\sup_{\ww\in{\mathcal F}  }w_i(x)\qquad\hbox{for every $(x,i)\in M\times\ind$}.
\end{equation}
As the supremum of an equi--Lipschitz family of critical subsolutions, we get that $\uu^0$ is  Lipschitz continuous and a critical subsolution as well, see \cite[Proposition 1.6]{DavZav14}. As a consequence of our convergence result, we will obtain in the end that $\uu^0$ is a critical solution belonging to ${\mathcal F} $.

We proceed by studying the asymptotic behavior of the discounted solutions $\uu^\lambda$  as $\lambda\to 0^+$ and the relation with $\uu^0$.  Let us denote by 
\[
\sol:=\left\{\uu\in\vLip\mid \, \uu=\lim_{k\to\ +\infty}\uu^{\lambda_k}\ \hbox{for some sequence $\lambda_k\to 0$}  \right\}.
\]
Note that any function in $\sol$ is a solution to the critical system \eqref{critical system} by the stability of the notion of viscosity solution.\smallskip

We begin with the following result:

\begin{prop}\label{prop constraint}
Let $\uu\in \sol$. Then
\begin{equation*}   
 \int_{ TM\times\ind} u_i(x)\, \dd \mu(x,v,i) \leqslant 0\qquad\hbox{for every $\mu\in\Mis_0$}.
\end{equation*}
In particular, $\uu\leqslant \uu^0$.
\end{prop}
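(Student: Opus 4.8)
The plan is to establish the integral inequality first and then read off $\uu \leqslant \uu^0$ as an immediate corollary. Fix $\uu \in \sol$, say $\uu = \lim_{k} \uu^{\lambda_k}$ with $\lambda_k \to 0^+$, and fix a Mather measure $\mu \in \Mis_0$. The guiding idea is to test the discounted equation against $\mu$, exploiting the two defining features of a Mather measure: its closedness, in the sense of Definition \ref{def closed measure}(ii), and the minimizing identity $\int_{TM\times\ind} L_i\,\dd\mu = -c(\HH)$ coming from Theorem \ref{ineq1}.

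Since each $\uu^{\lambda_k}$ solves \eqref{system discounted critical}, it is in particular a subsolution, so componentwise $(B\uu^{\lambda_k})_i + \lambda_k u^{\lambda_k}_i + H_i(x,Du^{\lambda_k}_i) \leqslant c(\HH)$ in the viscosity sense. Because the closedness of $\mu$ is only available against $C^1$ test functions, I would first regularize, exactly as in the proof of Theorem \ref{ineq1}: mollifying $\uu^{\lambda_k}$ produces $\uu^{\lambda_k,\eps} \in \vCuno$ which, thanks to the convexity of the $H_i$ in (H1) and Jensen's inequality, is an approximate subsolution, $(B\uu^{\lambda_k,\eps})_i + \lambda_k u^{\lambda_k,\eps}_i + H_i(x,Du^{\lambda_k,\eps}_i) \leqslant c(\HH) + \sigma(\eps)$ with $\sigma(\eps)\to 0$ as $\eps\to 0$; here mollification commutes with $B$ since the coupling matrix has constant entries.

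Applying Fenchel's inequality in the form $-H_i(x,Du^{\lambda_k,\eps}_i) \leqslant L_i(x,v) - \langle Du^{\lambda_k,\eps}_i, v\rangle$ and integrating the resulting pointwise bound against $\mu$ over $TM\times\ind$, I would invoke closedness with $\pphi := \uu^{\lambda_k,\eps}$ to annihilate the term $\int (B\uu^{\lambda_k,\eps})_i + \langle Du^{\lambda_k,\eps}_i, v\rangle\,\dd\mu = 0$, and the Mather identity $\int L_i\,\dd\mu = -c(\HH)$ to cancel the remaining constants against the integrated $c(\HH)$. What survives is $\lambda_k \int u^{\lambda_k,\eps}_i\,\dd\mu \leqslant \sigma(\eps)$. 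Letting $\eps\to 0$, so that $u^{\lambda_k,\eps}_i \to u^{\lambda_k}_i$ uniformly, gives $\lambda_k \int u^{\lambda_k}_i\,\dd\mu \leqslant 0$; dividing by $\lambda_k>0$ yields $\int u^{\lambda_k}_i\,\dd\mu \leqslant 0$. Since $\mu$ is a probability measure with compact support and $\uu^{\lambda_k}\to\uu$ uniformly, passing to the limit $k\to\infty$ delivers the claimed inequality.

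Finally, the inclusion $\uu\leqslant\uu^0$ is immediate: any $\uu\in\sol$ is a critical solution, hence a subsolution, and the inequality just proved is precisely condition \eqref{CONDITiON U0}, so $\uu\in{\mathcal F}$; by the definition \eqref{first def u_0} of $\uu^0$ as the pointwise supremum over ${\mathcal F}$ we conclude $u_i\leqslant u^0_i$ for every $i$. The main obstacle is the regularity gap between the merely Lipschitz solutions $\uu^{\lambda_k}$ and the $C^1$ test functions admitted by the closedness condition; the delicate point is the mollification step, and in particular checking that the convexity-based Jensen argument produces a genuinely vanishing error $\sigma(\eps)$, which relies on the equi-Lipschitz bound of Proposition \ref{prop u lambda} to keep the gradients in a fixed compact set where the $H_i$ have a controlled modulus of continuity in $x$.
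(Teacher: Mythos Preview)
Your argument is correct and follows essentially the same route as the paper: regularize the discounted solution by convolution, integrate the resulting approximate subsolution inequality against $\mu$, use Fenchel together with the closedness and minimality of $\mu$ to isolate $\lambda\int u^\lambda_i\,\dd\mu\leqslant \sigma$, then send the regularization parameter to zero, divide by $\lambda$, and pass to the limit in $\lambda$. The only cosmetic difference is that the paper establishes $\int u^\lambda_i\,\dd\mu\leqslant 0$ for \emph{every} $\lambda>0$ rather than just along the convergent subsequence, but the mechanism is identical.
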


\begin{proof}
Fix $\mu\in\Mis_0$. The assertion will be a direct consequence of the following fact: 
\begin{equation}\label{eq prima disuguaglianza}
\int_{ TM\times\ind} u^{\lambda}_i(x)\, \dd \mu(x,v,i) \leqslant 0\qquad\hbox{for every $\lambda>0$}.
\end{equation}
Indeed, let us fix $\lambda>0$. Regularizing $\uu^\lambda$ by convolution, we find a sequence of smooth functions $\ww^n:M\to\R^m$ 
such that $\ww^n \ucv \uu^\lambda$ and 
$$(B+\lambda \Id)\ww^n(x) +\HH\big(x, D \ww^n(x)\big) \leqslant \left(c(\HH)+\frac1n\right)\1\qquad\hbox{for every $x\in M$}.$$
By integrating this inequality with respect to $\mu$ and by using Fenchel's inequality we get
\begin{align*}
c(\HH)+\frac1n &\geqslant    \int_{ TM\times\ind}\Big((B+\lambda \Id)\ww^n(x)\Big)_i +H_i(x, D w^n_i)\, \dd \mu \\
&\geqslant  \int_{ TM\times\ind} \Big((B+\lambda \Id)\ww^n(x)^\eps\Big)_i +\langle D  w^n_i(x), v\rangle -L_i(x,v) \,\dd \mu \\
&=c(\HH)+ \int_{ TM\times\ind}\lambda w^n_i \,\dd \mu,
\end{align*}
where for the last equality we have used the fact that $\mu$ is closed and minimizing. The inequality \eqref{eq prima disuguaglianza} 
follows after sending $n\to +\infty$  and dividing by $\lambda>0$.
\end{proof}

The next (and final) step is to show that $\uu\geqslant \uu^0$ in $M$ whenever $\uu\in\sol$. This will be obtained by defining a special family of probability 
measures on $TM\times\ind$ for the discounted systems \eqref{system discounted critical}. The construction is the following: fix $(y,\ell)\in M\times\ind$ and, for every $\lambda>0$, let $\eta^\lambda:\Omega\to\curves$ be an admissible curve starting at $y$ that realizes the infimum in \eqref{discounted representation}.
We define a probability measure $\mu^\lambda_y$ on $TM\times\ind$ by setting 
\begin{equation}\label{def discounted measure}
\int_{ TM\times\ind} \ff\,\dd\mu^\lambda_y := \lambda\EE_\ell\left[ \int_0^{+\infty}\e^{-\lambda s} f_{\omega(s)}(\eta^\lambda(s,\omega),-\dot\eta^\lambda(s,\omega)\big)\,\dd s\right]\medskip
\end{equation}
for every $\ff\in\vCc$. The following holds:

\begin{prop}\label{prop discounted measures} The measures 
$\{\mu_y^\lambda\,|\,\lambda>0\}$ defined above are probability measures on $TM\times\ind$,
whose supports are all contained in a common compact subset of $TM\times\ind$. In particular, they are relatively compact in the space of 
probability measures on $TM\times\ind$ with respect to the weak convergence. Furthermore, if $\left(\mu_y^{\lambda_{n}}\right)_n$ is weakly converging to $\mu_y$ for some sequence $\lambda_n\to 0^+$, then $\mu_y$ is a minimizing Mather measure.
\end{prop}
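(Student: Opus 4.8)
The plan is to verify the four assertions in turn, the last one carrying the bulk of the work. First I would check that each $\mu_y^\lambda$ is a probability measure. Testing the defining identity \eqref{def discounted measure} against the constant $\ff=\1$ gives $\lambda\,\EE_\ell\big[\int_0^{+\infty}\e^{-\lambda s}\,\dd s\big]=\lambda\cdot\tfrac1\lambda=1$, while positivity is immediate since the integrand is nonnegative whenever $\ff\geqslant 0$; thus the positive linear functional \eqref{def discounted measure} on $\vCc$ represents, via Riesz, a Borel probability measure. For the common compact support I would invoke Theorem \ref{teo minimal discounted curve}(ii): the minimizing curves satisfy $\|\dot\eta^\lambda(\cdot,\omega)\|_\infty\leqslant k^*$ uniformly in $\omega$ and $\lambda$, so the integrand in \eqref{def discounted measure} only samples points $(x,v,i)$ with $|v|\leqslant k^*$. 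Hence every $\mu_y^\lambda$ is supported in the fixed compact set $K^*:=\{(x,v,i)\mid |v|\leqslant k^*\}\subset TM\times\ind$, which gives tightness and therefore relative compactness by Prokhorov's theorem.

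For the final assertion, suppose $\mu_y^{\lambda_n}\weakcv\mu_y$ with $\lambda_n\to 0^+$. Since all measures live in $K^*$, so does $\mu_y$, and weak convergence against $\vCc$ upgrades to $\int f\,\dd\mu_y^{\lambda_n}\to\int f\,\dd\mu_y$ for every continuous $f$ (multiply $f$ by a cutoff equal to $1$ on $K^*$). I must show $\mu_y$ is closed and minimizing. Condition (i) of Definition \ref{def closed measure} is automatic from compact support. For the closedness condition (ii), I would fix $\pphi\in\vCuno$ and apply Dynkin's formula (Theorem \ref{Dynkin's formula}) to $\gb(t,x):=\e^{-\lambda t}\pphi(x)$ along $\eta^\lambda$. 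Integrating \eqref{eq Dynkin's formula} over $(0,+\infty)$, the left-hand side telescopes to $-\phi_\ell(y)$: the boundary term at $+\infty$ vanishes because $\pphi$ is bounded and $\e^{-\lambda t}\to 0$, while at $0$ one uses $\eta^\lambda(0,\omega)=y$ and $\omega(0)=\ell$ under $\PP_\ell$. Rearranging, using $-\langle D\phi_{\omega(s)},\dot\eta^\lambda\rangle=\langle D\phi_{\omega(s)},-\dot\eta^\lambda\rangle$, multiplying by $\lambda$, and recognizing the resulting $\e^{-\lambda s}$-weighted expectations as integrals against $\mu_y^\lambda$, I expect to reach
\[
\int_{TM\times\ind}\big((B\pphi)_i+\langle D\phi_i,v\rangle\big)\,\dd\mu_y^\lambda=\lambda\Big(\phi_\ell(y)-\int_{TM\times\ind}\phi_i\,\dd\mu_y^\lambda\Big).
\]
The right-hand side is $O(\lambda)$ since $\pphi$ is bounded and the $\mu_y^\lambda$ are probabilities, so letting $\lambda=\lambda_n\to 0^+$ and using the convergence of integrals of continuous functions forces $\int\big((B\pphi)_i+\langle D\phi_i,v\rangle\big)\,\dd\mu_y=0$, which is exactly condition (ii); hence $\mu_y\in\Mis$.

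It remains to show $\mu_y$ is minimizing. Starting from the representation formula \eqref{discounted representation} realized by $\eta^\lambda$ and multiplying by $\lambda$, the definition \eqref{def discounted measure} applied to the function $\ff$ with $i$-th component $L_i(x,v)+c(\HH)$ yields $\int_{TM\times\ind}\big(L_i(x,v)+c(\HH)\big)\,\dd\mu_y^\lambda=\lambda\,u^\lambda_\ell(y)$. By Proposition \ref{prop u lambda} we have $\lambda\,u^\lambda_\ell(y)\to 0$, and since $L_i$ is continuous and the measures have common compact support, the left-hand side converges along $\lambda_n$ to $\int_{TM\times\ind} L_i\,\dd\mu_y+c(\HH)$. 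Hence $\int_{TM\times\ind}L_i\,\dd\mu_y=-c(\HH)$, which by Theorem \ref{ineq1} means $\mu_y$ attains the minimum, i.e.\ $\mu_y\in\Mis_0$. The delicate point throughout is the closedness computation: one must carefully keep track of the extra $-\lambda\e^{-\lambda s}\phi$ term produced by differentiating the discount factor in Dynkin's formula and confirm that, after the $\lambda$-rescaling, the constraint defect is genuinely of order $\lambda$ rather than $O(1)$. This vanishing defect is precisely what renders the limit measure closed in the sense of Definition \ref{def closed measure}.
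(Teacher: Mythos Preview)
Your proposal is correct and follows essentially the same approach as the paper: the probability and compact-support claims are handled via the uniform velocity bound from Theorem \ref{teo minimal discounted curve}(ii), closedness is obtained by applying Dynkin's formula to $\gb(t,x)=\e^{-\lambda t}\pphi(x)$ to derive the identity $\int\big((B\pphi)_i+\langle D\phi_i,v\rangle\big)\,\dd\mu_y^\lambda=\lambda\phi_\ell(y)-\lambda\int\phi_i\,\dd\mu_y^\lambda$ and then letting $\lambda_n\to 0^+$, and the minimizing property follows from $\lambda u^\lambda_\ell(y)=\int(L_i+c(\HH))\,\dd\mu_y^\lambda$ together with $\lambda u^\lambda\to 0$. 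Your write-up supplies a few more justifications (Riesz, Prokhorov, the cutoff to pass from $\vCc$ to all continuous integrands), but the argument is the same as the paper's.
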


\begin{proof} 
According to Theorem \ref{teo minimal discounted curve}, there exists a constant $\kappa^*$ such that \ $\|\dot\eta^\lambda(\cdot,\omega)\|_\infty\leqslant k^*$ \  for every $\omega\in\Omega$ and $\lambda>0$. Set $K:=\{(x,v)\in TM\mid  |v|\leqslant \kappa^*\}$. 
Then the measures $\mu_y^\lambda$ are all supported in the compact set $K\times\ind$ and are probability measures, as it can be easily checked by their definition. 
 This readily implies the asserted relative compactness of $\{\mu_y^\lambda\,|\,\lambda>0\}$. Let now assume that  $\left(\mu_y^{\lambda_n}\right)_n$ is weakly converging to $\mu_y$ for some $\lambda_n\to 0$. Then $\mu_y$ is a probability measure with support in $K\times\ind$, in particular it satisfies item (i) in Definition \ref{def closed measure}. 
 Moreover, if $\pphi\in\vCuno$, by Dynkin's formula applied to the function $\gb(t,x):=\e^{-\lambda t}\pphi(x)$, see Theorem \ref{Dynkin's formula}, we get  
 \begin{eqnarray*}
&&\!\!\!\!\!\!\!\!\!  \EE_\ell\left[\int_0^{+\infty}\!\!\! \e^{-\lambda s}\left( \langle D\phi_{\omega(s)}\big(\eta^\lambda(s,\omega)\big),-\dot\eta^\lambda(s,\omega)\rangle 
 +\big(B\pphi\big)_{\omega(s)}\big(\eta^\lambda(s,\omega)\big) +\lambda\phi_{\omega(s)}\big(\eta^\lambda(s,\omega)\big)  \right) \dd s \right]\\
 &&\qquad=\phi_\ell(y),
 \end{eqnarray*}
yielding
\[
\displaystyle\int_{ TM\times\ind } \big(B\pphi(x)\big)_i + \langle D \phi_i(x), v\rangle\  \dd \mu^\lambda_y 
=
\lambda\phi_\ell(y)-\lambda\int_{TM\times\ind} \phi_i\,\dd\mu^\lambda_y.
\]
By setting $\lambda:=\lambda_n$ in the previous equality and sending $n\to +\infty$ we infer 
\[
\displaystyle\int_{ TM\times\ind } \big(B\pphi(x)\big)_i + \langle D \phi_i(x), v\rangle\  \dd \mu_y=0,
\]
thus proving that $\mu_y$ is closed. 

To prove that $\mu_y$ is minimizing, we recall that, by definition,  
\begin{align*}
\lambda u^{\lambda}_{\ell}(y) & = \int_{ TM\times\ind} \big(L_i(x,v)+c(\HH)\big)\,\dd\mu^\lambda_y \qquad\hbox{for every $\lambda>0$.}
\end{align*}
The assertion follows by setting $\lambda:=\lambda_n$ and sending $n\to +\infty$. 
\end{proof}

We proceed by proving a lemma that will be crucial for the proof of Theorem \ref{intro main teo}. 

\begin{lemma}\label{ineq prim}
Let $\ww$ be any critical subsolution. For every $\lambda>0$ and $(y,\ell)\in M\times\ind$ we have 
\begin{equation}\label{eq pre-final}
u^\lambda_\ell(y)\geqslant w_\ell(y)-\int_{TM\times\ind} w_i\, \dd {\mu}^{\lambda}_y,
\end{equation}
where $\mu^\lambda_y$ is the probability measure defined by \eqref{def discounted measure}.
\end{lemma}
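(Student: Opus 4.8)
The plan is to test a smooth approximation of the subsolution inequality satisfied by $\ww$ against the optimal curve $\eta^\lambda$ that defines $\mu^\lambda_y$, following the scheme already used in the proofs of Theorem \ref{ineq1} and Proposition \ref{prop constraint} but now carrying the discount factor. First I would regularize $\ww$ by convolution, obtaining smooth functions $\ww^n:M\to\R^m$ with $\ww^n\ucv\ww$ and
\[
(B\ww^n)_i(x)+H_i\big(x,Dw^n_i(x)\big)\leqslant c(\HH)+\frac1n\qquad\hbox{for every $(x,i)\in M\times\ind$}.
\]
I would then let $\eta^\lambda$ be the admissible curve starting at $y$ that realizes the infimum in \eqref{discounted representation} and enters the definition \eqref{def discounted measure} of $\mu^\lambda_y$.

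The core computation is an application of Dynkin's formula (Theorem \ref{Dynkin's formula}) to $\gb(t,x):=\e^{-\lambda t}\ww^n(x)$ along $\eta^\lambda$. Differentiating and integrating \eqref{eq Dynkin's formula} over $(0,+\infty)$, the left-hand side telescopes to $\lim_{T\to+\infty}\EE_\ell\big[\e^{-\lambda T}w^n_{\omega(T)}(\eta^\lambda(T,\omega))\big]-w^n_\ell(y)$; the first term vanishes since $\ww^n$ is bounded and $\e^{-\lambda T}\to 0$, while the second uses $\eta^\lambda(0,\omega)=y$ and that $\PP_\ell$ is concentrated on $\{\omega(0)=\ell\}$. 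This produces the identity
\[
w^n_\ell(y)=\EE_\ell\left[\int_0^{+\infty}\e^{-\lambda s}\Big((B\ww^n)_{\omega(s)}\big(\eta^\lambda(s,\omega)\big)+\lambda w^n_{\omega(s)}\big(\eta^\lambda(s,\omega)\big)+\big\langle Dw^n_{\omega(s)}\big(\eta^\lambda(s,\omega)\big),-\dot\eta^\lambda(s,\omega)\big\rangle\Big)\,\dd s\right].
\]

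I would then bound the integrand from above. The perturbed subsolution inequality replaces $(B\ww^n)_{\omega(s)}$ by $c(\HH)+1/n-H_{\omega(s)}(\eta^\lambda,Dw^n_{\omega(s)})$, and Fenchel's inequality gives $\langle Dw^n_{\omega(s)},-\dot\eta^\lambda\rangle-H_{\omega(s)}(\eta^\lambda,Dw^n_{\omega(s)})\leqslant L_{\omega(s)}(\eta^\lambda,-\dot\eta^\lambda)$. Since $\int_0^{+\infty}\e^{-\lambda s}\,\dd s=1/\lambda$, the $1/n$ term contributes $1/(n\lambda)$; by optimality of $\eta^\lambda$ in \eqref{discounted representation} the integral of $\e^{-\lambda s}\big(L_{\omega(s)}(\eta^\lambda,-\dot\eta^\lambda)+c(\HH)\big)$ is exactly $u^\lambda_\ell(y)$; and the term $\lambda w^n_{\omega(s)}$ integrates, by the very definition \eqref{def discounted measure}, to $\int_{TM\times\ind}w^n_i\,\dd\mu^\lambda_y$. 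This yields
\[
w^n_\ell(y)\leqslant u^\lambda_\ell(y)+\frac1{n\lambda}+\int_{TM\times\ind}w^n_i\,\dd\mu^\lambda_y.
\]
Letting $n\to+\infty$ and using $\ww^n\ucv\ww$ together with the fact that $\mu^\lambda_y$ is a probability measure with compact support then gives \eqref{eq pre-final}.

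The delicate points are twofold. First, the improper-integral form of Dynkin's formula must be justified: the velocity of $\eta^\lambda$ is bounded by the universal constant $k^*$ of Theorem \ref{teo minimal discounted curve}, so the weight $\e^{-\lambda s}$ guarantees absolute convergence of the integral over $(0,+\infty)$ and the vanishing of the boundary term at infinity. Second, and most importantly, it is essential that $\eta^\lambda$ be the minimizer rather than an arbitrary admissible curve: only optimality turns the inequality produced by Fenchel's inequality into the exact value $u^\lambda_\ell(y)$, which is what makes the estimate sharp enough to survive the passage to the limit.
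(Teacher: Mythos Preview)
Your proof is correct and is essentially the paper's argument, using the same three ingredients (regularization of $\ww$, Dynkin's formula applied to $\gb(t,x)=\e^{-\lambda t}\ww^n(x)$ along $\eta^\lambda$, and Fenchel's inequality) in the same way. The only cosmetic difference is the order of presentation: the paper starts from the identity $u^\lambda_\ell(y)=\frac1\lambda\int (L_i+c(\HH))\,\dd\mu^\lambda_y$ and bounds downward, whereas you start from the Dynkin identity for $w^n_\ell(y)$ and bound upward; the two chains of inequalities are line-by-line equivalent.
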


\begin{proof}
 Let $\ww$ be a critical subsolution. By convolution with a regularizing kernel, we construct a family of smooth function $\ww^n:M\to\R^m$ 
 uniformly converging $\ww$ such that 
$$B\ww^n(x) +\HH\big(x, D \ww^n(x)\big) \leqslant \left(c(\HH)+\frac1n\right)\1\qquad\hbox{for every $x\in M$}.$$
Starting again from the definition of $\uu^\lambda$ and by exploiting Fenchel's inequality we obtain
\begin{align*}
u^{\lambda}_\ell(y) &   =   \frac1\lambda\int_{ TM\times\ind} \big(L_i(x,v)+c(\HH)\big)\,\dd\mu^\lambda_y  \\
&\geqslant \frac1\lambda\int_{ TM\times\ind} \left(\langle D  w^n_{i}(x), v\rangle - H_i\big(x, D w^n_{i}(x)\right)+c(\HH)\big)\,\dd\mu^\lambda_y \\
&\geqslant \frac1\lambda\int_{ TM\times\ind} \left(\langle D  w^n_{i}(x), v\rangle +\big(B\ww^n(x)\big)_i -\frac1n\right)\, \dd\mu^\lambda_y.
\end{align*}
Using the definition of $\mu^\lambda_y$ and  Dynkin's formula with $\gb(t,x)=\e^{-\lambda t}\ww^n(x)$, see Theorem \ref{Dynkin's formula}, we get
\begin{eqnarray*}
u^{\lambda}_\ell(y) &\geqslant&  \EE_\ell\left[ \int_0^{+\infty}\e^{-\lambda s}  
\Big(\big\langle D  w^n_{\omega(s)}\big(\eta^\lambda(s,\omega)\big),-\dot\eta^\lambda(s,\omega)\big\rangle +\big(B\ww^n\big)_{\om(s)}\big(\eta^\lambda(s,\omega)\big) -\frac1n\Big)
\,\dd s\right]\\
&=&  w^n_\ell(y) +\EE_\ell\left[ \int_0^{+\infty}- \lambda \e^{-\lambda s} w^n_{\om(s)}\big(\eta^\lambda(s,\omega)\big) -\frac{\e^{-\lambda s}}{n}
\,\dd s\right] \\
&= &  w^n_\ell(y)-\int_{ TM\times\ind}w^n_{i} \,\dd\mu^\lambda_y - \frac{1}{\lambda n}.
\end{eqnarray*}
The desired inequality follows by sending $n\to +\infty$. 
\end{proof}

We have now all the ingredients to prove our main result. 

\begin{proof}[Proof of Theorem \ref{intro main teo}]
Let $\uu\in \sol$. By Proposition \ref{prop constraint}, we already know that $\uu\leqslant \uu^0$. Let us prove the opposite inequality. 
By definition, there exists a sequence $\lambda_n\to 0^+$ such that $\uu^{\lambda_n}\ucv \uu$ as $n\to +\infty$. Pick $\ww\in \F$ and 
fix $(y,\ell)\in  TM\times\ind$. By setting $\lambda:=\lambda_n$ in \eqref{eq pre-final} and by sending $n\to +\infty$, we infer, thanks to Proposition 
\ref{prop discounted measures}, that there exists a Mather measure $\mu_y\in\Mis_0$ such that 
 $$
 u_\ell(y)  \geqslant w_\ell(y) - \int_{ TM\times\ind} w_i\, \dd\mu_y\geqslant w_\ell(y),
 $$
 where, for the last inequality, we have used the fact that $\ww\in\F$. 
 As this is true for any $\ww\in\F$ and arbitrary $(y,\ell)\in M\times\ind$, we infer that $\uu\geqslant \uu^0$. This concludes the proof.
\end{proof}

\bibliography{weakly}
\bibliographystyle{siam}

\end{document}